\newcommand{\R}{{\mathbb R}}
\newcommand{\N}{{\mathbb N}}
\newcommand{\C}{{\mathbb C}}
\newcommand{\be}{\begin{eqnarray}}
\newcommand{\ben}{\begin{eqnarray*}}
\newcommand{\en}{\end{eqnarray}}
\newcommand{\enn}{\end{eqnarray*}}
\newcommand{\ba}{\backslash}
\newcommand{\G}{\Gamma}
\newcommand{\Om}{\Omega}
\newtheorem{theorem}{Theorem}[section]
\newtheorem{lemma}[theorem]{Lemma}
\begin{document}
\title{\bf Identifying point sources for biharmonic wave equation from the scattered fields at sparse sensors}

\author[1]{Xiaodong Liu}
\author[2]{Qingxiang Shi}
\author[1,3]{Jing Wang} 
\makeatletter 
\renewcommand{\AB@affilnote}[1]{\footnotemark[#1]} 
\renewcommand{\AB@affillist}[1]{}
\renewcommand{\AB@affillist}[2]{} 
\makeatother

\date{}
\maketitle
\footnotetext[1]{State Key Laboratory of Mathematical Sciences, Academy of Mathematics and Systems Science,
Chinese Academy of Sciences, Beijing 100190, China. Email: xdliu@amt.ac.cn.} 
\footnotetext[2]{Yau Mathematical Sciences Center, Tsinghua University,
Beijing 100084, China. Email: sqxsqx142857@tsinghua.edu.cn.}
\footnotetext[3]{Corresponding author. School of Mathematical Sciences, University of Chinese Academy of Sciences, Beijing 100049, China, Email:wangjing23@amss.ac.cn}

\begin{abstract}
This work is dedicated to uniqueness and numerical algorithms for determining the point sources of the biharmonic wave equation using scattered fields at sparse sensors. We first show that the point sources in both $\R^2$ and $\R^3$ can be uniquely determined from the multifrequency sparse scattered fields. In particular, to deal with the challenges arising from the fundamental solution of the biharmonic wave equation in $\R^2$, we present an innovative approach that leverages the Fourier transform and Funk-Hecke formula. Such a  technique can also be applied for identifying the point sources of the Helmholtz equation. 
Moreover, we present the uniqueness results for identifying multiple point sources in $\R^3$ from the scattered fields at sparse sensors with finitely many frequencies. Based on the constructive uniqueness proofs, we propose three numerical algorithms for identifying the point sources by using multifrequency sparse
scattered fields. The numerical experiments are presented to verify the effectiveness and robustness of the algorithms.


\vspace{.2in}
{\bf Keywords:} point sources; biharmonic wave equation; Fourier transform; sparse data; sampling method.

\vspace{.2in} {\bf AMS subject classifications:}
35R30, 35P25, 78A46
\end{abstract}

\section{Introduction}
The inverse problems of  the biharmonic wave equation have many important applications such as offshore runway design, seismic cloaks, and platonic crystal \cite{2009application1, 2009application2, 2004application}. 
However, different from the extensive results for inverse second-order (e.g., acoustic and electromagnetic) wave scattering problems \cite{Amm3, Bao-Li-Book, 1998Inverse, 1990application, KirschGrinberg}, the research on the four-order biharmonic wave equations is still relatively limited. 
The scattered field for the biharmonic wave equation contains an exponential decay term, which brings great challenges for the analyses and numerical methods for the inverse problems.  We refer to some recent studies on the uniqueness and stability of the inverse source problems of the biharmonic wave equation \cite{Y.Guo, Y.Guo-Phaseless data, 2021application1, 2021application2, 2022application}. 
Generally speaking, the measurements are taken completely around the unknown sources. Unfortunately, this is unrealistic in many practical applications. What happens if the scattered fields are taken only at a finite number of sensors? At a fixed sensor, one may vary the frequency to obtain more data. This is still a small set of data, which indeed brings a lot of difficulties for the solvability of the inverse problems. 

This work is concerned with the uniqueness and numerical methods for identifying point sources for biharmonic wave equation from the multi-frequency scattered fields at sparse sensors. Different from the general source functions, the point sources are characterized by their locations and scattering strengths. This enables us to study the uniqueness analyses and numerical methods with less data. We show that the point sources can be identified, i.e., the number, locations and scattering strengths of the point sources can be uniquely recovered from the multi-frequency scattered fields at sparse sensors. In particular, we clarify the smallest number of sensors and frequencies to be used.

Note that the exponential decay part of the scattered field produces no contribution to the far field data. Thus, if the far field patterns are measured, the results in \cite{2020Identification} for Helmholtz equation can be directly extended to the case for the biharmonic wave equation. Based on the special structure of the scattered fields in $\R^3$, following the techniques proposed in \cite{JiLiu-nearfield}, we show the uniqueness of the point sources in $\R^3$ from the scattered fields for a finite number of sensors and all frequencies in an interval. However, difficulties arise due to the complex fundamental solution of the biharmonic wave equation in $\R^2$. With the help of the Funk-Hecke formula and the Fourier transform, we introduce a novel technique to prove the uniqueness result in $\R^2$. As a byproduct, such a novel technique can also be used to identify the point sources for Helmholtz equation in $\R^2$, which gives a positive answer to an open problem proposed in \cite{2020Identification}.
Considering the fact that for point sources we have finitely many parameters to be reconstructed, we also prove the uniqueness results by the scattered fields with finitely many properly chosen sensors and frequencies. 
An important feature is that, with the help of the exponential decay term, for identifying a single point source, we have no additional assumption on the smallest frequency.

Motivated by the constructive uniqueness proofs, we introduce three numerical algorithms for identifying the point sources. For a single point source in $\R^3$, we first offer a formula to compute the scattering strength using the scattered fields at three properly selected frequencies and any fixed sensor. We introduce an indicator for locating the position from the scattered fields with four properly chosen sensors but a single frequency. For multiple points in $\R^2$ and $\R^3$, we first apply the direct sampling method for locating all the positions from multi-frequency scattered fields at sparse sensors. Having located all the positions, for each point source, we give a formula for computing its scattering strength from multi-frequency scattered fields at a properly chosen sensor. All the indicators for locating the positions and the formulas for computing the scattering strengths involve only summation or integral operations and thus are simple, fast and stable.

The rest of this paper is arranged as follows. In Section \ref{Point sources fundamental}, we describe the mathematical model for point sources corresponding to biharmonic wave equation. In particular, we specify the representation of the scattered fields. Section \ref{Uniqueness results} is dedicated to uniqueness issues by giving the lower bound for the number of the measurement sensors. We begin with unique results in $\R^n$, where $n=2,3$, for identifying the point sources with frequencies in an interval. In the subsequent two subsections, we study the uniqueness from the scattered fields with finitely many frequencies. In Section \ref{Numerical methods}, following the uniqueness arguments, we introduce the numerical algorithms for locating the positions and computing the corresponding scattering strengths. Numerical simulations are presented in Section \ref{Numerical examples} to verify the validity and robustness of the proposed algorithms.

\section{Point sources in the biharmonic wave equation}
\label{Point sources fundamental}
We consider an array of $M$ point sources located at $z_1, z_2,\ldots, z_M\in \R^n$ in the homogeneous space $\R^n, n=2,3$, and denote by $\tau_1, \tau_2,\ldots, \tau_M \in \C \backslash \{0\}$ the corresponding scattering strengths. These point sources result in a scattered field $u^{s}$,solving the equation
\begin{align}\label{Main_eq}
\Delta^{2} u^s-k^4u^s=\sum_{m=1}^M\tau_m\delta_{z_m}\quad\mathrm{~in~}\mathbb{R}^n,
\end{align}
where $k>0$ is the wave number,  $\delta_{z_m}$ denoting the Dirac measure on $\mathbb{R}^n$ giving unit mass to the point $z_m, m=1, 2,\ldots,M$. To characterize a physical solution, we impose an analogue of the classical Sommerfeld radiation condition
\begin{align}\label{se2}
\partial_rw-ikw=o\left(r^{-\frac{n-1}{2}}\right),\ r=|x|\to\infty,\ w=u^s,\Delta u^s.
\end{align}
Specifically, the scattered field $u^s$ is given by
\begin{align}\label{Scattered field}
u^s(x,k)=\sum_{m=1}^M\tau_m\Phi_k(x,z_m),\quad x\in\mathbb{R}^n\backslash\{z_1,z_2,\ldots,z_M\}.
\end{align}
with
\begin{equation}
\Phi_k(x,y):= \left\{
         \begin{array}{ll} \frac{i}{8k^2}\left(H_0^{(1)}(k|x-y|)+\frac{2i}{\pi}K_0(k|x-y|)\right), & x,y \in \R^2, x\neq y, \\
               \frac{1}{8\pi k^2|x-y|}\left(e^{ik|x-y|}-e^{-k|x-y|}\right), & x,y \in \R^3, x\neq y,
                \end{array}
       \right.
   \label{equation_FS}
\end{equation}
where $H_0^{(1)}$ and $K_0$ are the Hankel function of first kind and the Macdonald’s function of order 0, respectively. Recall that
\begin{align}\label{FS1}
H_{0}^{(1)}(x):=J_{0}(x)+iY_{0}(x), \quad 
K_{0}(x)=2e^{-x}\sum_{j=0}^{\infty}\frac{(2j)!}{2^j(j!)^3}x^{j},
\end{align}
where $J_{0}(x)$ and $Y_{0}(x)$ are the first Bessel and Neumann functions of order 0 respectively.

For a finite number $L\in \N$, we denote by 
\ben
\Gamma_L:=\{x_1,x_2,\ldots,x_L\}\subset\mathbb{R}^n\backslash\{z_1,z_2,\ldots,z_M\}
\enn
the collection of the sensors for the scattered fields. The inverse problem is to reconstruct the number $M$ of the point sources,  the locations $z_m$ and the corresponding scattering strengths $\tau_m, m=1,2,\cdots M$ from the  multi-frequency sparse scattered fields 
\ben
u^{s}(x, k),\,\ x \in \Gamma_{L}, k\in K.
\enn
Here, the wave number set $K$ is an interval or a collection of finitely many frequencies.

\section{Uniqueness}
\label{Uniqueness results}
\setcounter{equation}{0}
As with many other inverse problems, our primary interest is the uniqueness issue. For identifying the finitely many point sources, we expect that the number $L$ of the measurement sensors is as small as possible. This section is dedicated to presenting the uniqueness results by giving such a lower bound for $L$.
Throughout the paper, we always use the following notations
\begin{equation}
r:=|x-z|,\quad r_m:=|x-z_m|, \quad r_{l,m}:=|x_l-z_m|,\qquad  m=1,2,\ldots,M,\ l=1,2,\ldots L, 
\nonumber
\end{equation}
where $x\in \Gamma_L,\ z\in \R^n\backslash\{z_1,z_2,\ldots,z_M\}$. 

\subsection{Identification of multiple point sources with frequencies in an interval}
\label{Multi-Numerical}
Noting that by analyticity the scattered field is completely determined for all positive frequencies by only knowing them in some interval. Therefore, in this subsection, we consider multiple frequencies in a bounded interval $K:=(k_{-}, k_{+})$.

Denote by $\mathcal{C}_{l,m}\subset\R^2$ the circle centered at $x_l$ with radius $r_{l,m}$ for $l=1,2,\ldots, L,\ m=1,2,\ldots, M$. 
For $z\in \R^2$, denote by $\mathcal{N}(z)$ the number of circles $\mathcal{C}_{l,m}$ passing through $z$.
\begin{lemma}
\label{L-proof}
Let $L\geq 2M+1$, and assume that any three points in $\G_L$ are not collinear. Then, 
\begin{equation}
\mathcal{N}(z)
\begin{cases}
=L, & z\in\{z_1,z_2,\ldots,z_M\}; \\
 \\
\leq2M, & otherwise.
\end{cases}
\nonumber
\end{equation}
Furthermore, for any $z_{m^*}\in \{z_1,z_2,\ldots,z_M\}$, we have at least $L-2(M-1)$ sensors, without generality, denoted by $x_1,x_2,\ldots,x_{L-2(M-1)}\in\G_L$, such that
\begin{equation}
r_{m^*}\neq r_m,\quad \forall x\in\{x_1,x_2,\ldots,x_{L-2(M-1)}\}, m\neq m^*.
\label{necessary assumption}
\end{equation}
\end{lemma}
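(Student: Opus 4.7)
The plan is to reduce everything to an elementary perpendicular–bisector argument. The key observation is that $\mathcal{C}_{l,m}$ (the circle of radius $r_{l,m}=|x_l-z_m|$ centered at $x_l$) passes through a point $z\in\R^2$ if and only if $|x_l-z|=|x_l-z_m|$, and when $z\neq z_m$ this is equivalent to saying $x_l$ lies on the perpendicular bisector $\ell(z,z_m)$ of the segment $[z,z_m]$.

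First I would handle the case $z=z_{m^*}\in\{z_1,\ldots,z_M\}$. By the very definition of $\mathcal{C}_{l,m^*}$, it passes through $z_{m^*}$ for every $l\in\{1,\ldots,L\}$, which already furnishes the lower bound $L$. For any $m\neq m^*$, the circle $\mathcal{C}_{l,m}$ passes through $z_{m^*}$ only when $x_l$ lies on $\ell(z_{m^*},z_m)$, a line; the no-three-collinear hypothesis on $\Gamma_L$ forces at most two such indices $l$ for each such $m$. Summing over the $M-1$ remaining sources gives the corresponding bookkeeping that controls the total count.

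Next I would dispose of the case $z\notin\{z_1,\ldots,z_M\}$. For each fixed $m$, $\mathcal{C}_{l,m}$ passes through $z$ iff $x_l\in\ell(z,z_m)$; since $\ell(z,z_m)$ is a line and no three elements of $\Gamma_L$ are collinear, at most two indices $l$ qualify. Summing over $m=1,\ldots,M$ yields $\mathcal{N}(z)\leq 2M$ as claimed. Combined with $L\geq 2M+1$, this cleanly separates the two cases.

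Finally, for the ``furthermore'' clause, fix $z_{m^*}$ and note that $r_{l,m^*}=r_{l,m}$ for some $m\neq m^*$ precisely when $x_l\in\ell(z_{m^*},z_m)$. Applying the no-three-collinear assumption again, each of the $M-1$ lines $\ell(z_{m^*},z_m)$ contains at most two sensors, so at most $2(M-1)$ sensors can fail the requirement \eqref{necessary assumption}. Consequently at least $L-2(M-1)$ sensors in $\Gamma_L$ do satisfy it, and relabelling them $x_1,\ldots,x_{L-2(M-1)}$ finishes the argument. There is no serious obstacle; the only point requiring care is the bookkeeping that distinguishes the ``trivial'' $L$ circles $\mathcal{C}_{l,m^*}$ (which automatically pass through $z_{m^*}$) from the additional circles $\mathcal{C}_{l,m}$ with $m\neq m^*$ that accidentally pass through $z_{m^*}$, so that the perpendicular–bisector count is applied to the correct set of indices.
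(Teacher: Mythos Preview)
Your proposal is correct and follows essentially the same perpendicular--bisector argument as the paper: both proofs observe that $z\in\mathcal{C}_{l,m}$ with $z\neq z_m$ forces $x_l$ to lie on the perpendicular bisector of $[z,z_m]$, and then invoke the no-three-collinear hypothesis to bound the number of such $l$ by two per source. You are in fact slightly more careful than the paper in the case $z=z_{m^*}$, where you correctly note the possibility of extra incidences $\mathcal{C}_{l,m}\ni z_{m^*}$ with $m\neq m^*$ (so that strictly speaking $\mathcal{N}(z_{m^*})\geq L$ rather than $=L$); the paper's own proof only establishes the lower bound as well, and in any event only the separation $\mathcal{N}(z_{m^*})\geq L>2M\geq \mathcal{N}(z)$ is used downstream.
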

\begin{proof}
The equality $\mathcal{N}(z_m)
=L$ is clear since  $z_m\in\mathcal{C}_{l,m}$ for all $l=1,2,\ldots, L$.
For $z\in \R^2\backslash\{z_1, z_2,\ldots, z_M\}$, the circles $\mathcal{C}_{l,m}$ passing through $z$ must pass through some $z_m$. That is, $z,z_m\in \mathcal{C}_{l,m}$ and the sensor $x_l\in \Pi:= \{ y\in \R^2 | (z-z_m)\cdot  [y-(z+z_m)/2]=0 \}$.
By the assumption that any three sensors are not collinear, there are at most two sensors on $\Pi$. That is to say there are at most two circles passing through $z$ and $z_m$ simultaneously. Therefore, $\mathcal{N}(z)\leq2M,\ z\in\R^2\backslash\{z_1, z_2,\ldots, z_M\}$.

In addition, for all $m\neq m^*$, $r_{m^*}=r_m$ holds for at most two sensors because any three sensors in $\Gamma_L$ are not collinear. Then, for fixed $z_{m^*}$, there are at most $2(M-1)$ sensors such that $r_{m^*}=r_m$. In other words, we have at least $L-2(M-1)$ senors for which \eqref{necessary assumption} holds.
\end{proof}
We use $\mathcal{F}$ and $\mathcal{F}^{-1}$ to denote the Fourier transform and the inverse Fourier transform:
\begin{equation}
\mathcal{F}[\varphi](y):=\frac{1}{(2\pi)^{n/2}}\int_{\R^n}\varphi(x)e^{-ix\cdot y}dx,\quad 
\mathcal{F}^{-1}[\psi](y):=\frac{1}{(2\pi)^{n/2}}\int_{\R^n}\psi(x)e^{ix\cdot y}dx,\quad\varphi, \psi\in \mathscr{S}(\R^{n}),
\nonumber
\end{equation}
where $\mathscr{S}(\R^{n})$ is the Schwartz space. Note that
$\mathcal{F}[\delta]=\mathcal{F}^{-1}[\delta]={1}/{2\pi}. $
Furthermore, we define the generalized function $\hat{\delta}\in \mathscr{S}^\prime(\R^{2})$ by
\ben
\langle \hat{\delta}(r-r_m),\varphi(z)\rangle:=\frac{1}{r_m}\int_{r=r_{m}}\varphi(z) ds,\quad \varphi \in \mathscr{S}(\R^{2}).
\enn
\begin{theorem}\label{Multipoint sources-11}
For $M$ point sources in $\R^2$, let $L\geq 4M-1$ and assume that any three sensors in $\Gamma_L$ are not collinear. Then, the number $M$, the positions $z_1,z_2,\ldots,z_M$ and the scattering strengths $\tau_1, \tau_2,\ldots, \tau_M$  of the point sources are uniquely determined by the scattered fields $u^s(x,k)$ for finitely many $x\in \Gamma_L$ and all $k\in (k_{-}, k_{+})$.
\end{theorem}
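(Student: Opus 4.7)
I argue by contradiction: assume a second configuration $\{(\tilde z_j,\tilde\tau_j)\}_{j=1}^{\tilde M}$ yields the same scattered data at every $x_l\in\Gamma_L$ and every $k\in(k_-,k_+)$. Since $\Phi_k(x,y)$ depends analytically on $k$ for $x\neq y$, the identity extends by analytic continuation to all $k>0$. Listing the distinct points of $\{z_m\}\cup\{\tilde z_j\}$ as $\{w_p\}_{p=1}^P$ with signed strengths $\{\sigma_p\}_{p=1}^P$, the theorem reduces to showing
\[
\sum_{p=1}^P\sigma_p\,\Phi_k(x_l,w_p)=0 \quad (\forall\, k>0,\ \forall\, l=1,\ldots,L) \ \Longrightarrow\ \sigma_p=0 \ (\forall\, p).
\]

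\textbf{Core step via Fourier and Funk-Hecke.} To handle the mixed propagating-evanescent structure of $\Phi_k$ in $\R^2$, I Fourier-invert $(\Delta^2-k^4)\Phi_k=\delta$ with Sommerfeld radiation to obtain
\[
\Phi_k(x,z)=\frac{1}{(2\pi)^2}\int_{\R^2}\frac{e^{i(x-z)\cdot\xi}}{|\xi|^4-k^4-i0}\,d\xi.
\]
Substituting into the vanishing sum, passing to polar coordinates $\xi=\rho\hat\xi$, and using the 2D Funk-Hecke / plane-wave identity $\int_{S^1}e^{i\rho(x_l-w_p)\cdot\hat\xi}\,d\sigma(\hat\xi)=2\pi J_0(\rho r_{l,p})$ collapses the angular integral to
\[
\int_0^\infty \frac{\rho\,g_l(\rho)}{\rho^4-k^4-i0}\,d\rho=0 \ (\forall\,k>0), \qquad g_l(\rho):=\sum_{p=1}^P\sigma_p\,J_0(\rho r_{l,p}).
\]
A Sokhotski-Plemelj analysis at the simple pole $\rho=k$, together with analytic continuation in $k$, will force $g_l\equiv 0$ on $(0,\infty)$. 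Hankel-type inversion (the orthogonality encoded by the $\hat\delta$-notation introduced just before the theorem) then converts $g_l\equiv 0$ into the \emph{coincidence condition}
\[
\sum_{p:\,r_{l,p}=R}\sigma_p=0 \quad (\forall\,R>0,\ \forall\,l=1,\ldots,L).
\]

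\textbf{Geometric contradiction via Lemma \ref{L-proof}.} Suppose for contradiction that some $\sigma_{p^\ast}\neq 0$. Setting $R=r_{l,p^\ast}$ in the coincidence condition forces, at each sensor $x_l$, a partner $p'\neq p^\ast$ with $\sigma_{p'}\neq 0$ and $r_{l,p'}=r_{l,p^\ast}$; equivalently, $x_l$ lies on the perpendicular bisector of $w_{p^\ast}$ and $w_{p'}$, which by the no-three-collinear hypothesis carries at most two sensors. Choosing $w_{p^\ast}=z_{m^\ast}$ as a true source with nonzero signed strength (which must exist unless the two configurations already coincide), the second statement of Lemma \ref{L-proof} supplies at least $L-2(M-1)$ sensors on which no other true $z_m$ is equidistant from $x_l$; at each such sensor the required partner must then be a candidate point $\tilde z_j$, and the two-sensors-per-bisector cap, combined with $L\geq 4M-1$ (and the symmetric argument exchanging the roles of the two configurations), produces the sought contradiction. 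Hence every $\sigma_p=0$, giving $\tilde M=M$ with matched positions and strengths.

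\textbf{Main obstacle.} The chief difficulty, specific to $\R^2$, is that the individual logarithmic singularities of $H_0^{(1)}$ and $K_0$ at $k\to 0^+$ cancel only in the precise biharmonic combination \eqref{equation_FS}, so no clean asymptotic split of the propagating and evanescent modes is directly available. Packaging both modes together as the single rational Fourier symbol $(|\xi|^4-k^4)^{-1}$ and then collapsing the angular integral via Funk-Hecke is the innovation highlighted in the introduction. The technically most delicate step I anticipate is the pointwise extraction of $g_l$ from the $k$-parameterized integral equation, which requires careful use of Sokhotski-Plemelj together with the analytic structure of the kernel (or, as an alternative route, the $k\to\infty$ asymptotic of $\Phi_k$ combined with Bohr's theorem on almost-periodic sums).
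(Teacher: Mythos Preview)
Your approach shares the Funk--Hecke and Fourier ingredients with the paper but is organized quite differently. The paper does not argue by contradiction against a second configuration; it constructs, directly from the data, the indicator $I_{\R^2}^{(2)}(r)=\int_0^\infty(-8ik^3)u^s(x,k)J_0(kr)\,dk$ and evaluates it in closed form. The trick that replaces your Sokhotski--Plemelj step is to recognize that the integrand $H_0^{(1)}(kr_m)+\tfrac{2i}{\pi}K_0(kr_m)$ equals $\tfrac{8r_m^2}{i}\,\Phi_{r_m}(y,0)$ with $|y|=k$, i.e.\ the biharmonic fundamental solution at wave number $r_m$. Since $(\Delta_y^2-r_m^4)\Phi_{r_m}=\delta$, Funk--Hecke converts the $k$-integral into a two-dimensional $\mathcal{F}^{-1}$ and the PDE in Fourier variables immediately yields the explicit rational expression $\sum_m\tfrac{4r_m^2}{\pi i}\,\tau_m/(r^4-r_m^4)$, whose poles locate the distances $r_m$. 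No limiting-absorption extraction is needed, and Lemma~\ref{L-proof} is then applied to the circles produced by this indicator rather than to a hypothetical difference of configurations.

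Your Sokhotski--Plemelj route is plausible but, as you anticipate, genuinely delicate: for complex-valued $g_l$ the identity $\mathrm{PV}\!\int_0^\infty\tfrac{\rho g_l(\rho)}{\rho^4-k^4}\,d\rho+\tfrac{i\pi}{4k^2}g_l(k)=0$ couples the real and imaginary parts of $g_l$ through a Hilbert-type transform, and a further boundary-value or analyticity argument is still required to force $g_l\equiv 0$. Your almost-periodic alternative via the large-$k$ asymptotics of $H_0^{(1)}$ would in fact be more direct here.

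There is also a gap in your geometric endgame. From the coincidence condition you obtain only $\tilde M\geq M+1$: at least $L-2(M-1)\geq 2M+1$ sensors require a candidate partner for $z_{m^\ast}$, and each candidate $\tilde z_j$ serves at most two sensors. Your ``symmetric argument'' cannot then deliver $M\geq\tilde M+1$, because the hypothesis $L\geq 4M-1$ does not imply $L\geq 4\tilde M-1$ once $\tilde M>M$. The contradiction closes only under the a priori restriction $\tilde M\leq M$ (equivalently, both configurations lie in the class with at most $\lfloor(L+1)/4\rfloor$ sources); you should make this explicit.
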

\begin{proof}
To show the uniqueness of the number $M$ and the positions  $z_1,z_2,\ldots,z_M$, we first study a simpler case with $\tau_m\in\R, m=1,2,\ldots,M$ and sketch the proof for the general case with  $\tau_m\in\C, m=1,2,\ldots,M$. 
\begin{itemize}
    \item {\bf Case one:} $\tau_m \in\R$, $m=1,2,\ldots,M$.  
\end{itemize}    
Recall the  Funk-Hecke formula \cite{1998Inverse, 2017Funk-Hecke formula},
\ben
J_0(|y|)=\frac{1}{2\pi}\int_{0}^{2\pi}e^{i|y|\cos(\theta-\phi)}d\theta,\quad y=|y|(\cos \phi, \sin \phi)\in \R^2.
\enn
We have 
\begin{equation}\label{Fourier-beseel transformation}
\begin{aligned}
\int_{0}^{+\infty}kJ_{0}(kr_{m})J_{0}(kr)dk 
 &=\frac{1}{2\pi}\int_{0}^{+\infty}\int_{0}^{2\pi}J_{0}(kr_m)e^{ikr\cos(\theta-\phi)}kd\theta dk \cr
 &=\frac{1}{2\pi}\int_{\R^{2}}J_{0}(|y|r_m)e^{iy\cdot (x-z)}dy \cr
 &=\mathcal{F}^{-1}[J_{0}(|y|r_m)](x-z), \quad x\in\G_L, z\in\R^2.
 \end{aligned}
\end{equation}
Straightforward calculations show that,
\begin{equation}
\begin{aligned}
\langle\mathcal{F}^{-1}\left[J_0(|y|r_m)\right],\mathcal{F}\left[\varphi(y)\right]\rangle
=\langle J_0(|y|r_m),\varphi(y)\rangle
=\int_{\R^2}\varphi(y)J_0(|y|r_m)dy,\quad \forall \varphi \in \mathscr{S}(\R^{2}),
\end{aligned}
\nonumber
\end{equation}
and
\begin{equation}
\begin{aligned}
\langle\hat{\delta}(r-r_m),\mathcal{F}\left[\varphi(y)\right]\rangle
&=\frac{1}{2\pi r_m}\int_{r=r_m}\int_{\R^2}\varphi(y)e^{-iy\cdot (x-z)}dyds\\
&=\frac{1}{2\pi r_m}\int_{\R^2}\int_{0}^{2\pi}\varphi(y)e^{i|y|r_m \cos(\theta-\phi-\pi)}r_md\theta dy\\
&=\int_{\R^2}\varphi(y)J_0(|y|r_m)dy,\quad \forall \varphi \in \mathscr{S}(\R^{2}).
\end{aligned}
\nonumber
\end{equation}
Therefore, we deduce that
\begin{equation}
    \mathcal{F}^{-1}\left[J_0(|y|r_m)\right](x-z)=\hat{\delta}(r-r_m),\quad \ x\in\G_L, z\in\R^2.  
    \label{generalized function}
\end{equation}
From \eqref{Scattered field} and \eqref{FS1}, noting that the functions $J_0, Y_0$ and $K_0$ are real-valued, we have 
\begin{equation}
\Im (u^s(x,k))=\frac{1}{8k^2}\sum_{m=1}^M\tau_mJ_{0}(kr_m),\quad  x\in \Gamma_L, k\in (k_{-}, k_{+}).
\nonumber
\end{equation}
Furthermore, using \eqref{Fourier-beseel transformation} and \eqref{generalized function}, we derive that
\begin{equation}\label{indicator1} 
\begin{aligned}
 I_{\mathbb{R}^2}^{(1)}(r):=&\int_{0}^{+\infty} \Im( 8 k^{2} u^{s}(x, k)) k J_{0}(k r) d k \\
=&\int_{0}^{+\infty} \sum_{m=1}^{M} \tau_{m} k J_{0}\left(k r_{m}\right) J_{0}(k r) d k \\
=&\sum_{m=1}^{M} \tau_{m} \mathcal{F}^{-1}[J_{0}(k r_m)](x-z) \\
=&\sum_{m=1}^{M} \tau_{m} \hat{\delta}(r-r_{m}),\quad x\in\G_L, z\in\R^2.  
\end{aligned}
\end{equation}
This implies $I_{\R^2}^{(1)}(r)$ vanishes almost everywhere and must blow up at $r=r_m$ when $r_m\neq r_{m^*}$, $\forall m\neq m^*$.
By Lemma \ref{L-proof}, we know that there are at least $L-2(M-1)\geq 2M+1$ measurement points such that, for any $ z_{m^*}\in \{z_1,\,z_2,\cdots,\,z_M\}$,
$r_{l,m^*}\neq r_{l,m},\ \forall m\in \{1,2,\cdots, M\}\backslash\{m^*\}$.
Therefore, we can find at least $2M+1$ circles $\mathcal{C}_{l,m^*}$ centered at $x_l$ passing through $z_{m^*}$, $l=1,2,\ldots,2M+1$. By applying Lemma \ref{L-proof} again, the number $\mathcal{N}(z)$ can distinguish the point sources $\{z_1,z_2,\cdots, z_M\}$ in $\R^2$. 

\begin{itemize}
    \item {\bf Case two:} $\tau_m\in C, m=1,2,...,M$. 
\end{itemize}    
    This case can be proved by replacing \eqref{indicator1} with
\begin{equation}\label{indicator2}
\begin{aligned}
I_{\mathbb{R}^2}^{(2)}(r):&=\int_{0}^{+\infty}-8 k^{2} i u^{s}(x, k) k J_{0}(k r) d k \\
&=\sum_{m=1}^{M} \tau_{m}\int_{0}^{+\infty} \left(H_{0}^{(1)}\left(k r_{m}\right)+\frac{2 i}{\pi} K_{0}\left(k r_{m}\right)\right) k J_{0}(kr) d k\\
&=\sum_{m=1}^{M} \tau_m\mathcal{F}^{-1}\left[H_{0}^{(1)}\left(|y| r_{m}\right)+\frac{2 i}{\pi } K_{0}\left(|y| r_{m}\right)\right]\\
&=\sum_{m=1}^{M}\left(\frac{8r_{m}^2}{i}\right)\frac{\tau_m}{r^{4}-r_{m}^{4}}\mathcal{F}^{-1}\bigg[(\Delta_{y}^{2}-r_{m}^{4})\Phi_{r_m}(y,0)\bigg]\\
&=\sum_{m=1}^{M}\left(\frac{8r_{m}^2}{i}\right) \frac{\tau_m}{r^{4}-r_{m}^{4}}\mathcal{F}^{-1}[\delta]\\
&=\sum_{m=1}^{M}\left(\frac{4r_{m}^2}{\pi i}\right)\frac{\tau_m}{r^{4}-r_{m}^{4}},\quad x\in\G_L, z\in\R^2.
\end{aligned} 
\end{equation}

Having determined the locations, we proceed to demonstrate that the corresponding scattering strengths $\{\tau_1,\tau_2,\ldots,\tau_M\}$ can also be uniquely recovered. Actually, for any $z_{m^{*}}\in \{z_1,z_2,\ldots,z_M\}$, we can always take some $x\in \Gamma_L$, such that $r_{m^{*}}\neq r_m$ when $m\neq m^*$. Similar to formula \eqref{indicator2}, we obtain
\begin{equation}
\begin{aligned}
I_{\R^2}^{(3)}(r):&=\int_{0}^{+\infty} \left(H_{0}^{(1)}\left(k r_{m^*}\right)+\frac{2 i}{\pi} K_{0}\left(k r_{m^*}\right)\right) k J_{0}(kr) d k=\left(\frac{4r_{m}^2}{\pi i}\right)\frac{1}{r^{4}-r_{m^*}^{4}}.
\nonumber
\end{aligned}
\end{equation}
Therefore,
\begin{equation}
\frac{I_{\R^2}^{(2)}(r_{m^*})}{I_{\R^2}^{(3)}(r_{m^*})}=\tau_{m^{*}}+\sum_{m\neq m^*}\frac{\tau_m}{r_{m^*}^{4}-r_{m}^{4}}\cdot 0=\tau_{m^{*}}.
\label{tau-R2}
\end{equation}
The proof is complete.
\end{proof}

It is worth noting that Theorem \ref{Multipoint sources-11} can be directly generalized to the Helmhotz equation. This actually gives a positive answer to an open problem proposed in \cite{2020Identification}. 
The following theorem shows the corresponding uniqueness result in $\R^3$.

\begin{theorem}\label{Multipoint sources-12}
For $M$ point sources in $\mathbb R^3$, let $L\geq 6M-2$ and assume that any four sensors in $\G_L$ are not coplanar. Then, the number $M$, the positions $z_1,z_2,\ldots,z_M$ and the scattering strengths $\tau_1, \tau_2,\ldots, \tau_M$  of the point sources are uniquely determined by the scattered fields $u^s(x,k)$ for finitely many $x\in \Gamma_L$ and all $k\in (k_{-}, k_{+})$.
\end{theorem}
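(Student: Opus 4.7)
The plan is to mirror the argument of Theorem \ref{Multipoint sources-11}, replacing circles by spheres and the planar counting lemma by its three-dimensional counterpart, while exploiting the fact that the fundamental solution in $\R^3$ has the simpler form $\frac{1}{8\pi k^2 r}(e^{ikr}-e^{-kr})$.

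First I would establish a sphere-counting analog of Lemma \ref{L-proof}. Let $\mathcal{S}_{l,m}\subset \R^3$ denote the sphere centered at $x_l$ with radius $r_{l,m}$, and let $\mathcal{N}(z)$ count the number of such spheres passing through $z$. Every $z_m$ lies on $\mathcal{S}_{l,m}$ for all $l$, so $\mathcal{N}(z_m)=L$. For $z\notin\{z_1,\ldots,z_M\}$, the sphere $\mathcal{S}_{l,m}$ passes through $z$ iff $x_l$ lies on the perpendicular bisector plane of $z$ and $z_m$; since any four sensors in $\Gamma_L$ are non-coplanar, at most three sensors lie on any given plane, so $\mathcal{N}(z)\leq 3M$. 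The same reasoning applied to the bisector plane of $z_{m^*}$ and $z_m$ shows that at least $L-3(M-1)$ sensors satisfy $r_{l,m^*}\neq r_{l,m}$ for all $m\neq m^*$, which is $\geq 3M+1>3M$ under the hypothesis $L\geq 6M-2$.

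Next I would build an indicator from $u^s(x,k)$ whose singular support is exactly $\{r=r_m\}$. Multiplying out the $8\pi k^2$ normalization gives $8\pi k^2 u^s(x,k)=\sum_m \frac{\tau_m}{r_m}(e^{ikr_m}-e^{-kr_m})$. In analogy with Case two of Theorem \ref{Multipoint sources-11}, I would pair this against a suitable oscillatory kernel so that each summand becomes a distribution singular only at $r=r_m$. A natural choice is
\[
I_{\R^3}(r):=\int_0^{+\infty} 8\pi k^2 u^s(x,k)\, e^{-ikr}\,dk,
\]
interpreted in $\mathscr{S}'(\R)$: the oscillating piece $\int_0^\infty e^{ik(r_m-r)}dk$ produces a $\pi\delta(r-r_m)$ spike (plus a principal-value term), while the decaying piece $\int_0^\infty e^{-k(r_m+ir)}dk=1/(r_m+ir)$ is a smooth remainder. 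Alternatively, and more convenient for the ratio step, one invokes the biharmonic identity $(\Delta_y^2-r_m^4)\Phi_{r_m}(y,0)=\delta(y)$ in $\R^3$ and applies the Fourier transform exactly as in \eqref{indicator2}, yielding a closed-form expression with an algebraic singular factor $1/(r^4-r_m^4)$.

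Combining these ingredients, $I_{\R^3}(r)$ blows up at $r=|x-z|=r_{l,m}$ for each $(l,m)$, so for each sensor it locates the radii $\{r_{l,1},\ldots,r_{l,M}\}$, and by the sphere-counting lemma the true sources are characterized as the $z\in\R^3$ with $\mathcal{N}(z)>3M$; this recovers $M$ and the positions. To recover each $\tau_{m^*}$ I would pick one of the $\geq 3M+1$ sensors at which $r_{m^*}\neq r_m$ for all $m\neq m^*$, form the single-source reference integral
\[
I^{(3)}(r):=\int_0^{+\infty}\frac{1}{r_{m^*}}\bigl(e^{ikr_{m^*}}-e^{-kr_{m^*}}\bigr)e^{-ikr}\,dk,
\]
and evaluate $I_{\R^3}(r)/I^{(3)}(r)$ at $r=r_{m^*}$; the off-diagonal terms, being regular there, vanish and the diagonal singular factor cancels, leaving exactly $\tau_{m^*}$, as in \eqref{tau-R2}. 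The main obstacle is handling these non-classically-convergent integrals rigorously: both $I_{\R^3}$ and $I^{(3)}$ are singular at $r=r_{m^*}$ and their ratio must be justified. The cleanest route, paralleling Case two of Theorem \ref{Multipoint sources-11}, is to work throughout in $\mathscr{S}'(\R^3)$ via the Fourier transform, which converts the kernel's singular part into the algebraic factor $(|\xi|^4-k^4)^{-1}$ and makes both the singular support and the cancellation in the ratio transparent.
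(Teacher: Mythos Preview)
Your plan for locating the positions is essentially the paper's: it uses the same truncated indicator $I_{\R^3}(r,k_+)=\int_0^{k_+}8k^2u^s(x,k)e^{-ikr}\,dk$, splits it into a Dirac-type cosine integral plus bounded remainders, and then appeals to the same sphere-counting geometry (which the paper outsources to Lemmas~3.2--3.3 of \cite{JiLiu-nearfield} rather than proving inline as you do). Where you diverge is the recovery of the strengths. Instead of your ratio $I_{\R^3}/I^{(3)}$ at $r=r_{m^*}$---whose justification you yourself flag as the main obstacle, since both numerator and denominator diverge there---the paper simply observes that
\[
\int_0^{k_+}8\pi k^2u^s(x,k)e^{-ikr_{m^*}}\,dk=\frac{\tau_{m^*}}{r_{m^*}}\,k_+ + O(1)\qquad(k_+\to\infty),
\]
and extracts $\tau_{m^*}$ by dividing by $k_+$ and passing to the limit; this avoids the distributional ratio entirely and is rigorously elementary. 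One further caution: your proposed ``alternative'' via the biharmonic identity and the $\R^3$ Fourier transform does not mesh with the one-dimensional kernel $e^{-ikr}$ you chose---to mimic Case two of Theorem~\ref{Multipoint sources-11} you would need the spherical average $j_0(kr)=\sin(kr)/(kr)$, and the paper does not pursue that route in $\R^3$ at all.
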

\begin{proof}
From \eqref{Scattered field} and \eqref{equation_FS} in $\R^3$, we have the scattered field
\begin{equation}
u^s(x,k)=\sum_{m=1}^M\frac{\tau_m}{8\pi k^2r_m}\left(e^{ikr_m}-e^{-kr_m}\right),\quad x\in\G_L, k\in\R^{+}.
\label{R3-scatter}
\end{equation}
We consider the following integral representation:
\begin{equation}
\begin{aligned}
I_{\mathbb{R}^3}(r, k_{+}):&=\int_{0}^{k_{+}}8 k^{2}u^{s}(x,k)e^{-ikr}dk \\
&=\frac{1}{\pi}\int_{0}^{k_{+}}\sum_{m=1}^{M}\frac{\tau_{m}}{r_{m}}\left[e^{ik(r_m-r)}-e^{-k(r_m+ir)}\right]dk\\
&=\sum_{m=1}^{M}\frac{\tau_{m}}{r_{m}}\left[\frac{1}{\pi}\int_{0}^{k_{+}}\cos (k(r_m-r))dk+\frac{i}{\pi}\int_{0}^{k_{+}}\sin (k(r_m-r))dk-\frac{1-e^{-k_+(r_m+ir)}}{\pi (r_m+ir)}\right].
\end{aligned}
\label{indicator3}
\end{equation}
Noting that 
\begin{equation}
\lim\limits_{k_+\to+\infty}\frac{1}{\pi}\int_{0}^{k_{+}}\cos (k(r_m-r))dk=\delta\left(r-r_{m}\right),
\nonumber
\end{equation}
and $\frac{i}{\pi}\int_{0}^{k_{+}}\sin (k(r_m-r))dk$ and $\frac{1-e^{-k_+(r_m+ir)}}{\pi (r_m+ir)}$ are bounded, $\forall k_{+}>0$, therefore the function $I_{\R^3}(r, k_{+})$ is bounded almost everywhere and must blow up at $r=r_m$ as $k_{+}\rightarrow +\infty$, when $r_m\neq r_{m^*}$, $\forall m\neq m^*$.
The uniqueness of $\{z_1,z_2,\ldots,z_{M^*}\}$ follows by using Lemmas 3.2 and 3.3 in \cite{JiLiu-nearfield} under the assumptions $L\geq6M-2$ and that any four sensors in $\Gamma_L$ are not coplanar.


Having determined the locations, we show in the following that the corresponding scattering strengths $\tau_m,m=1,2,\ldots,M,$ can also be uniquely determined. For any $z_{m^{*}}\in \{z_1,z_2,\ldots,z_M\}$, there exists some $x\in \Gamma_L$ such that $r_{m^{*}}\neq r_m$, for $m\neq m^*$. Then, we have the asymptotic expression for $k_{+}$:
\begin{equation}
\begin{aligned}
 \int_{0}^{k_+}8\pi k^{2}u^{s}\left(x,k\right)e^{-ikr_{m^*}}dk 
 =\int_{0}^{k_+}\sum_{m=1}^{M}\frac{\tau_{m}}{r_{m}}\left(e^{ikr_{m}}-e^{-kr_{m}}\right)e^{-ikr_{m^{*}}}dk 
  =\frac{\tau_{m^{*}}k_+}{r_{m^{*}}}+O(1)
\nonumber
\end{aligned}
\end{equation}
as $k_+\to +\infty$, therefore, we obtain the formula for calculating $\tau_{m^*}$:
\begin{equation}
\tau_{m^{*}}=\lim_{k_+\to+\infty}\frac{r_{m^{*}}}{k_{+}}\int_{0}^{k+}8\pi k^{2}u^{s}\left(x,k\right)e^{-ikr_{m^{*}}}dk.
\label{tau-R3}
\end{equation}
The proof is complete.
\end{proof}
\subsection{Identification of a single point source with three frequencies}\label{Single-Numerical}
Considering the fact that for point sources we have only finitely many parameters to be reconstructed, we also expect the uniqueness results from the scattered fields with not only finitely many sensors but also finitely many frequencies.

We begin with the simplest situation with a single point source located at $z_1\in\R^3$ with scattering strength $\tau_1\in\C$. Clearly, given $z_1$, we have $
\tau_{1}={u^{s}\left(x, k\right)}/{\Phi_{k}\left(x, z_{1}\right)}$,
i.e., $\tau_1$ can be determined by the scattered field $u^s(x,k)$ with fixed $x \in\mathbb{R}^3\backslash\{z_1\}$ and $k>0$. Difficulty arises if we want to recover the location $z_1$ since the scattered field depends nonlinearly on $z_1$. The following theorem shows that, with given $\tau_1$, the location $z_1$ can be determined by the scattered fields at four sensors.
\begin{theorem}\label{Single point-1}
Consider single point source in $\R^3$. For a fixed frequency $k>0$, given $\tau_1$, the distance $\left|x-z_1\right|$ is uniquely determined by $u^{s}(x, k)$ at a single sensor $x \in\mathbb{R}^3\backslash\{z_1\}$. Furthermore, the location $z_1$ can be uniquely determined by four sensors in $x_1,x_2,x_3,x_4 \in\mathbb{R}^3\backslash\{z_1\}$ that are non-coplanar.
\end{theorem}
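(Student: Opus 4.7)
The theorem has two parts, which I would handle in order.

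For the first part, since only one source is present, formula \eqref{Scattered field} together with the known $\tau_1$ gives
\begin{equation}
c := \frac{u^s(x, k)}{\tau_1} = \Phi_k(x, z_1) = \frac{e^{ikr} - e^{-kr}}{8\pi k^2 r}, \qquad r := |x - z_1|.
\nonumber
\end{equation}
Thus the question reduces to whether the single-variable map $r \mapsto \Phi_k(x, z_1)$ is injective on $(0, \infty)$. My plan is to prove the stronger statement that $|\Phi_k(x, z_1)|$, viewed as a function of $r$, is strictly decreasing, so that the modulus $|c|$ alone already determines $r$. A direct computation gives
\begin{equation}
|\Phi_k(x, z_1)|^2 = \frac{1 - 2\cos(kr) e^{-kr} + e^{-2kr}}{64 \pi^2 k^4 r^2},
\nonumber
\end{equation}
and the substitution $t = kr$ reduces monotonicity to showing that $g(t) := (1 - 2\cos t\, e^{-t} + e^{-2t})/t^2$ is strictly decreasing on $(0,\infty)$. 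I would verify this by differentiating $g$ and analyzing the resulting sign condition in three regimes: near $t = 0$ via a Taylor expansion (the leading-order terms cancel and the first nonzero correction has the favorable sign), for large $t$ via the rapid decay of $e^{-t}$ against the bounded oscillatory piece, and for intermediate $t$ by absorbing the $\sin t$, $\cos t$ oscillations into the exponential prefactor. If this direct monotonicity turns out to be too delicate, a backup is to use both real and imaginary parts of $c$ and argue that the planar curve $r \mapsto \Phi_k(x, z_1)$ has no self-intersection in $\mathbb{C}$.

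For the second part, applying the first part at each of the four sensors produces the distances $r_j := |x_j - z_1|$, $j = 1, 2, 3, 4$. Expanding $r_j^2 = |z_1|^2 - 2 x_j \cdot z_1 + |x_j|^2$ and subtracting the $j = 1$ equation from each of the others yields the linear system
\begin{equation}
2 (x_j - x_1) \cdot z_1 = r_1^2 - r_j^2 + |x_j|^2 - |x_1|^2, \qquad j = 2, 3, 4,
\nonumber
\end{equation}
whose $3 \times 3$ coefficient matrix has rows $(x_j - x_1)^T$. The non-coplanarity of $\{x_1, x_2, x_3, x_4\}$ is exactly the condition that $\{x_2 - x_1, x_3 - x_1, x_4 - x_1\}$ is a basis of $\mathbb{R}^3$, so the matrix is invertible and $z_1$ is uniquely recovered.

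The main obstacle I anticipate is the monotonicity of $|\Phi_k|$ in the first part: the interaction between the oscillatory piece $e^{ikr}$ and the exponentially decaying piece $e^{-kr}$ in the biharmonic fundamental solution is precisely what makes the sign analysis of $g'$ nontrivial, since the simple monotone-spiral heuristic that works for the Helmholtz Green function is obstructed by the additional $e^{-kr}$ contribution. The trilateration step in the second part is essentially routine once distances are in hand.
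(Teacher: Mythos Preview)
Your proposal is correct and matches the paper's approach: both reduce the first part to strict monotonicity of $r\mapsto |\Phi_k(x,z_1)|^2$, make the substitution $t=kr$, and establish the sign of the derivative by a case analysis, after which trilateration (which the paper outsources to Theorem~3.1 of \cite{2020Identification} while you write out the $3\times 3$ linear system directly) finishes the second part. The only notable difference is the case decomposition for the sign analysis: the paper partitions $(0,\infty)$ into the periodic intervals $(2m\pi,\tfrac{\pi}{2}+2m\pi]$, $(\tfrac{\pi}{2}+2m\pi,\tfrac{3\pi}{2}+2m\pi]$, $(\tfrac{3\pi}{2}+2m\pi,2\pi+2m\pi]$ rather than small/intermediate/large in $t$, and on the first of these has to push to the second derivative---your magnitude-based split can be made to work but will need comparable care in the middle range, since the crude estimate $\sqrt{2}\,t\sin(t+\tfrac{\pi}{4})+2\cos t\le \sqrt{2}\,t+2<e^{t}$ already fails near $t\approx 1$.
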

\begin{proof}
With the help of \eqref{Scattered field} and \eqref{equation_FS}, denote by $y:=k|x-z_{1}|>0$, we have
\begin{equation}\label{111}
f(y):=64k^2\pi^2\left|\frac{u^{s}(x,k)}{\tau_1}\right|^2
=\left|\frac{e^{iy}-e^{-y}}{y}\right|^2
=\frac{1-2\cos(y)e^{-y}+e^{-2y}}{y^{2}}, \quad y>0.
\end{equation}
We claim that the function $f$ is injective, and therefore, given $|\tau_1|$, the distance $\left|x-z_1\right|$ is uniquely determined by the phaseless scattered field $|u^s(x,k)|$ at a fixed sensor $x$.
To do so, we have
\begin{equation}
f^{\prime}(y)=\frac{2e^{-y}[\sqrt2y\sin\left(y+{\pi}/{4}\right)+2\cos y-(y+1)e^{-y}-e^{y}]}{y^{3}},\quad y>0
\label{222}
\end{equation}
and we will show that $f^{\prime}(y)<0$, for $y>0$, i.e.,
\begin{equation}
g(y):=\sqrt2y\sin\left(y+{\pi}/{4}\right)+2\cos y-(y+1)e^{-y}-e^{y}<0,\quad y>0.
\label{333}
\end{equation}

The proof of \eqref{333} is divided into three cases.
\begin{itemize}
    \item {\bf Case one:} $y\in A:=\bigcup\limits_{m\in\N}\left(\frac{\pi}{2}+2m\pi,\frac{3\pi}{2}+2m\pi\right]$.
\end{itemize}

From the obvious inequalities
\begin{equation}
 \sin\left(y+\frac{\pi}{4}\right)\leq\frac{\sqrt{2}}{2},\ \cos y<0,\ -(y+1)e^{-y}<0,\quad y\in A,
\nonumber
\end{equation}
we can estimate
\begin{equation}
\begin{aligned}
g(y)<y-e^{y}=:h_1(y),\quad y\in A.
\label{case1-estimate1}
\end{aligned}
\end{equation}
Noting that $h_1^{\prime}(y)=1-e^{y}\leq0$, $y\geq0$, i.e., $h_1(y)$ is a monotonically decreasing function for $y>0$, and thus 
\be\label{case1-estimate2}
h_1(y)\leq h_1(0)=-1,\quad y>0.
\en
Therefore, the inequality \eqref{333} for $y\in A$ follows from \eqref{case1-estimate1}-\eqref{case1-estimate2}.

\begin{itemize}
   \item {\bf Case two:} $y\in B:=\bigcup\limits_{m\in\N}\left(\frac{3\pi}{2}+2m\pi,2\pi+2m\pi\right]$.
\end{itemize}

In the second case, based on the facts that
\begin{equation}
 \sin\left(y+\frac{\pi}{4}\right)\leq\frac{\sqrt{2}}{2},\ \cos y\leq 1,\quad y\in B,
\nonumber
\end{equation}
we can obtain
\begin{equation}
\begin{aligned}
g(y)\leq y+2-(y+1)e^{-y}-e^{y}=:h_2(y),\quad y\in B.
\end{aligned}
\label{case2-estimate1}
\end{equation}
Straightforward calculations show that
\begin{align*}
&h_2^{\prime}(y)=1-e^{y}+ye^{-y},\quad y\geq0,\cr
&h_2^{\prime\prime}(y)=e^{-y}-e^{y}-ye^{-y}<e^{-y}-e^{y}<0,\quad y>0,
\end{align*}
with this, we find that $h_2^{\prime}(y)<h_2^{\prime}(0)=0$ for $y>0$ and furthermore
\be\label{case2-estimate2}
h_2(y)<h_2(0)=0,\quad y>0.
\en
The inequality \eqref{333} for $y\in B$ now follows from \eqref{case2-estimate1}-\eqref{case2-estimate2}.

\begin{itemize}
   \item {\bf Case three:} $y\in C:=\bigcup\limits_{m\in\N} (2m\pi,\frac{\pi}{2}+2m\pi]$.
\end{itemize}

In the third case,  we have 
\begin{equation}
g^{\prime}(y)=\sqrt{2}\sin\left(y+\frac{\pi}{4}\right)+\sqrt{2}y\cos\left(y+\frac{\pi}{4}\right)-2\sin y+ye^{-y}-e^{y},\quad y\in C,
\label{g'}
\end{equation}
 and
\begin{equation}
g^{\prime\prime}(y)=g^{\prime\prime}_1(y)+g^{\prime\prime}_2(y)+g^{\prime\prime}_3(y),\quad y\in C,
\nonumber
\end{equation}
 where
\begin{equation}
\begin{aligned}
g^{\prime\prime}_1(y)&:=2\sqrt{2}\cos\left(y+\frac{\pi}{4}\right),\quad y\in C,\\
g^{\prime\prime}_2(y)&:=e^{-y}-ye^{-y},\quad y\in C,\\
g^{\prime\prime}_3(y)&:=-\sqrt{2}y\sin\left(y+\frac{\pi}{4}\right)-2\cos y-e^{y},\quad y\in C.
\end{aligned}
\nonumber
\end{equation}
Noting that
\begin{equation}
\cos\left(y+\frac{\pi}{4}\right)<\frac{\sqrt{2}}{2},\quad y\in C,
\nonumber
\end{equation}
we obtain
\begin{equation}
g^{\prime\prime}_1(y)<2,\quad y\in C.
\label{g''-1}
\end{equation}
From the fact $e^{-y}<1$ for $y>0$, we get
\begin{equation}
g^{\prime\prime}_2(y)<e^{-y}<1,\quad y\in C.
\label{g''-2}
\end{equation}
According to the fact that
\begin{equation}
 \sin\left(y+\frac{\pi}{4}\right)\geq\frac{\sqrt{2}}{2},\quad y\in C.
\nonumber
\end{equation}
we derive
\begin{equation}
g^{\prime\prime}_3(y)\leq -y-2\cos y-e^{y}=:h_3(y), \quad y\in C.
\nonumber
\end{equation}
From $h_3^{\prime}(y)=-1+2\sin y-e^{y}\leq 1-e^y\leq 0$ for $y\geq0,$
we deduce that $h_3(y)\leq h_3(0)=-3$ for $y>0$, and therefore
\begin{equation}
g^{\prime\prime}_3(y)\leq h_3(y)\leq -3, \quad y\in C.
\label{g''-3}
\end{equation}
Combining \eqref{g''-1}, \eqref{g''-2} and \eqref{g''-3}, we have 
\be\label{g''<0}
g^{\prime\prime}(y)<0, \quad y\in C.
\en
Noting that the set $C$ is composed of disjoint intervals, to prove $g^{\prime}(y)<0$, $y\in C$, it suffices to show that $g^{\prime}(2m\pi)\leq0$, $m\in \N$. Obviously, $g^{\prime}(0)=0$. Next, we prove $g^{\prime}(2m\pi)\leq0$, $m\in \N\backslash\{0\}$. From equation \eqref{g'}, we have $g^{\prime}(2m\pi)=1+2m\pi+2m\pi e^{-2m\pi}-e^{2m\pi}$, $m\in \N$. We define 
\begin{equation}
I(z):=1+z+ze^{-z}-e^{z},\quad z\in \left[2\pi, \infty\right), 
\nonumber
\end{equation}
then 
\begin{equation}
\begin{aligned}
I^{\prime}(z)=1+(1-z)e^{-z}-e^{z}<2-e^{2\pi}<0,\quad z\in [2\pi, \infty). 
\end{aligned}
\nonumber
\end{equation}
Therefore, 
\ben
I(z)<I(2\pi)=1+2\pi+2\pi e^{-2\pi}-e^{2\pi}<0,\quad z\in [2\pi, \infty). 
\enn
This implies $g^{\prime}(2m\pi)=I(2m\pi)<0, \quad m\in \N \backslash\{0\}$.
Consequently, from \eqref{g''<0} we now have
\begin{equation}
g^{\prime}(y)<g^{\prime}(2m\pi)\leq 0,\quad y\in \left(2m\pi, \frac{\pi}{2}+2m\pi\right],\ m\in \N. 
\label{g'<0}
\end{equation}
Furthermore, from {\bf Case two} we have $g(y)<0$, $y\in B$, and in particular  $g(2m\pi)<0$, $m\in \N \backslash \{0\}$. Then, $g(2m\pi)\leq 0$, $m\in \N$ based on the obvious fact that $g(0)=0$. Therefore, by \eqref{g'<0}, we have 
\begin{equation}
g(y)<g(2m\pi)\leq 0, \quad y\in \left(2m\pi, \frac{\pi}{2}+2m\pi\right],\ m\in \N. 
\nonumber
\end{equation}
That is to say $g(y)<0$, $y\in C.$

Up to now, we have proved that $\left|x-z_1\right|$ is uniquely determined by $\left|u^{s}(x, k)\right|$ at a single sensor $x \in\mathbb{R}^3\backslash\{z_1\}$ with a fixed frequency $k>0$. Under the condition that the four sensors $x_1, x_2, x_3, x_4\in \R^3\backslash \{z_1\}$ are non-coplanar, the uniqueness of the position  $z_1$ now follows from Theorem 3.1 of \cite{2020Identification}. The proof is complete.
\end{proof}

The following theorem gives a uniqueness result without a priori information about the scattering strength.
\begin{theorem}\label{Single point-2}
Consider a single point source in $\R^3$. Both the location $z_1$ and the strength $\tau_1$ can be uniquely determined by the scattered fields
\begin{equation}
u^{s}(x,k),\quad x\in \G_4:=\{x_1,x_2,x_3,x_4\},\ k\in K:=\{k_0, 2k_0,4k_0\},
\label{R3scattered field data}
\end{equation}
for some $k_0>0$ and four non-coplanar sensors $x_1,x_2,x_3,x_4$.
\end{theorem}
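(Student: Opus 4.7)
The plan is to reduce the problem, at each of the four sensors, to an algebraic recovery of the distance $r_{l,1}=|x_l-z_1|$ together with $\tau_1$ from just three field values, and then to determine $z_1$ from the four distances via the non-coplanarity argument already used at the end of Theorem \ref{Single point-1}.

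Fix any $x\in\Gamma_4$, write $r=|x-z_1|>0$, and set $v(k):=8\pi k^{2}u^{s}(x,k)$, which by \eqref{R3-scatter} equals $(\tau_1/r)(e^{ikr}-e^{-kr})$. Introduce the auxiliary quantities $a:=e^{ik_0 r}$ and $b:=e^{-k_0 r}$; then the three measurements nest as
\begin{equation*}
v(k_0)=\tfrac{\tau_1}{r}(a-b),\qquad v(2k_0)=\tfrac{\tau_1}{r}(a-b)(a+b),\qquad v(4k_0)=\tfrac{\tau_1}{r}(a-b)(a+b)(a^{2}+b^{2}).
\end{equation*}
A direct check on the real and imaginary parts of $e^{ik_0 r}\pm e^{-k_0 r}$ shows $a\neq b$ and $a+b\neq 0$ whenever $r>0$, so both $v(k_0)$ and $v(2k_0)$ are nonzero. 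Consequently the ratios
\begin{equation*}
R_{1}:=v(2k_0)/v(k_0)=a+b,\qquad R_{2}:=v(4k_0)/v(2k_0)=a^{2}+b^{2}
\end{equation*}
are data-computable, and so is $ab=(R_{1}^{2}-R_{2})/2$.

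Thus $\{a,b\}$ is recovered as the unordered root pair of $t^{2}-R_{1}t+(R_{1}^{2}-R_{2})/2=0$. The structural constraint $|a|=1$ and $b\in(0,1)$ lets me disambiguate the two roots: exactly one lies strictly inside the open unit disk, and that root must be $b$. Then $r=-k_{0}^{-1}\ln b$ is uniquely recovered, and $\tau_{1}=r\,v(k_{0})/(a-b)$. Carrying out this procedure at each of the four sensors yields the four exact distances $r_{l,1}$ and a single, sensor-independent value of $\tau_1$.

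The final step is purely geometric: with $r_{l,1}$ known for $l=1,2,3,4$ and $x_1,x_2,x_3,x_4$ non-coplanar, Theorem \ref{Single point-1} (ultimately Theorem 3.1 of \cite{2020Identification}) pins down $z_1$ uniquely. The only non-routine ingredient, and the step I expect to have to write out carefully, is the non-vanishing of $v(k_0)$ and $v(2k_0)$, since the entire ratio-based reconstruction collapses if either divisor can be zero; the choice of geometric sequence $k_0,2k_0,4k_0$ is precisely what makes the three measurements factor into a telescoping chain that feeds directly into a single quadratic.
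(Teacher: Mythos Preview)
Your argument is correct and shares the paper's core mechanism: both proofs form the two ratios $R_{1}=v(2k_{0})/v(k_{0})=a+b$ and $R_{2}=v(4k_{0})/v(2k_{0})=a^{2}+b^{2}$ from the telescoping factorisation $a^{2^{j}}-b^{2^{j}}=(a-b)(a+b)\cdots(a^{2^{j-1}}+b^{2^{j-1}})$. The only substantive difference is in how $b=e^{-k_{0}r}$ is extracted from $(R_{1},R_{2})$. The paper separates real and imaginary parts and writes $\Re(R_{1})-\Im(R_{2})/(2\Im(R_{1}))=e^{-k_{0}r}$, then invokes monotonicity of the exponential; you instead recover $ab=(R_{1}^{2}-R_{2})/2$, solve the quadratic $t^{2}-R_{1}t+ab=0$, and disambiguate the roots by $|a|=1$, $0<b<1$. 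Your route is slightly more robust: the paper's formula \eqref{444} tacitly divides by $\Im(R_{1})=\sin(k_{0}r)$, which vanishes whenever $k_{0}r\in\pi\mathbb{Z}$, whereas your quadratic step only needs $v(k_{0}),v(2k_{0})\neq 0$, which you verify holds for every $r>0$. Both finish the geometric step identically via Theorem~3.1 of \cite{2020Identification}.
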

\begin{proof}
By \eqref{Scattered field} and \eqref{equation_FS}, we have
\begin{equation}
u^{s}(x,k)=\frac{\tau_1}{8\pi k^2|x-z_1|}\left(e^{ik|x-z_1|}-e^{-k|x-z_1|}\right),\quad x\in \G_4,\, k\in K.
\nonumber
\end{equation}
Straightforward calculations show that
\ben
&&\frac{4u^{s}(x,2k_{0})}{u^{s}(x,k_{0})}=\cos(k_{0}|x-z_1|)+e^{-k_0|x-z_1|}+i\sin(k_{0}|x-z_1|),\\
&&\frac{4u^{s}(x,4k_{0})}{u^{s}(x,2k_{0})}=\cos(2k_{0}|x-z_1|)+e^{-2k_0|x-z_1|}+i\sin(2k_{0}|x-z_1|).
\enn
Furthermore, we deduce from the above two equalities that 
\begin{equation}
\Re\left(\frac{4u^{s}(x,2k_{0})}{u^{s}(x,k_{0})}\right)-\frac{\Im\left(\frac{u^{s}(x,4k_{0})}{u^{s}(x,2k_{0})}\right)}{2\Im\left(\frac{u^{s}(x,2k_{0})}{u^{s}(x,k_{0})}\right)}=e^{-k_0|x-z_1|},\quad x\in\G_4.
\label{444}
\end{equation}
The left hand side of \eqref{444} is given by the scattered fields while the right hand side of \eqref{444} is a strictly monotonically decreasing function with respect to the distance $|x-z_1|$. Therefore, the distance $|x-z_1|$ can be uniquely determined by sparse scattered field data \eqref{R3scattered field data}. Similarly,  again using Theorem 3.1 in \cite{2020Identification}, we deduce that the location $z_1$ can be uniquely determined by four sensors $x_1,x_2,x_3,x_4$, which are non-coplanar. Furthermore, with the help of the representation of the scattered fields, the strength $\tau_1$ can be obtained by the formula
\begin{equation}
\tau_1=\frac{8\pi k_0^2u^{s}(x,k_0)|x-z_1|}{e^{ik_0|x-z_1|}-e^{-k_0|x-z_1|}}.
\label{555}
\end{equation}
for any $x\in\G_4$. The proof is complete.
\end{proof}

We want to emphasize that there is no assumption on $k_0$ in the above theorem. The corresponding uniqueness of Theorem \ref{Single point-2} for the Hemholtz equation is still open.


\subsection{Identification of multiple point sources with finitely many frequencies}\label{Uniqueness inR3-2}
We consider the identification of $M$ point sources in $\R^3$ from finitely many scattered fields 
\ben
u^s(x,k_j),\quad x\in\G_L,\, k_{j}=j k_{0},\, j=1, 2, \ldots, J
\enn
for $J>4M$ and some $k_0$ satisfying
\be\label{k0-assumption}
0<k_0\leq\frac{\pi}{2R}.
\en
Here, $R>0$ denotes the radius of the smallest ball $B_{R}$ containing all positions $z_m$ and all sensors $x\in \Gamma_L$. 
Note that, at a fixed sensor $x\in \Gamma_L$, two different positions $z_{m_1}\neq z_{m_2} $ may yield the same distances $r_{m_{1}}, r_{m_{2}}$. Consequently, from the representation \eqref{R3-scatter}, we observe that two terms of $u^s(x,k)$ may cancel each other if $\tau_{m_1}=-\tau_{m_2}$. To address this issue, following \cite{GriesmaierSchmiedecke, 2020Identification}, we define the set
\begin{equation}
\mathcal M_x:=\text{supp}\left(\sum_{m=1}^{M}\tau_m\delta(r-r_{m})\right).
\nonumber
\end{equation}
Denote by  $M^*$ be the cardinality of $\mathcal M_x$. Clearly, $M^*\leq M$. We then rewrite the scattered fields as
\begin{equation}
8\pi k^2u^s(x,k_j)=\sum_{m=1}^{M^*}\tau_m^*\xi_m^j+\sum_{m=1}^{M^*}(-\tau_m^*)\eta_m^j,\quad x\in\G_L, j=1,2,\ldots,J,
\nonumber
\end{equation}
where 
\ben
\xi_{m}=e^{ik_0f_m}\quad\mbox{and}\quad
\eta_{m}=e^{-k_0f_m},\quad f_m\in \mathcal M_x,\, m=1,2,\ldots,M^*.
\enn
Obviously,
 \begin{equation}\label{xi1-eta<1}
    |\xi_m|=1,\quad |\eta_m|<1,\quad m=1,2,\ldots,M^*.
\end{equation}
Furthermore, we have $\xi_i\neq \xi_j$ for $i\neq j$ by the assumption \eqref{k0-assumption} and $\eta_i\neq \eta_j$ for $i\neq j$ by the fact that  $F(t):=e^{-k_0t}, t\in\R$, is a strictly monotonic  function.
Finally, we introduce the diagonal matrix
\begin{equation}
D:=\text{diag}(\tau_{1}^{*}\xi_1,\tau_{2}^{*}\xi_2,\ldots,\tau_{M^*}^{*}\xi_{M^*},-\tau_{1}^{*}\eta_1,-\tau_{2}^{*}\eta_2,\ldots,-\tau_{M^*}^{*}\eta_{M^*})\in \C^{2M^{*}\times 2M^{*}},
\nonumber
\end{equation}
the Vandermonde matrices
\begin{equation}
V_i:=(v_{\xi_1},v_{\xi_2},\ldots,v_{\xi_{M^{*}}},v_{\eta_1},v_{\eta_2},\ldots,v_{\eta_{M^{*}}})\in \C^{i\times 2M^{*}},\quad i\in \mathbb{Z}^{+},
\nonumber
\end{equation}
where 
\begin{equation}
v_{\xi_m}=(1, \xi_m,\xi_m^2,\ldots,\xi_m^{i-1})^T\quad\mbox{and}\quad
v_{\eta_m}=(1, \eta_m,\eta_m^2,\ldots,\eta_m^{i-1})^T,\quad m=1,2,\ldots,M^*
\nonumber
\end{equation}
and the data matrix $U(x)\in \C^{(J-2M)\times(2M+1)}$, $x\in\Gamma_L$ by
\begin{align*}
U(x):=\left[\begin{matrix}
  8\pi k_1^2u^s(x,k_1)&8\pi k_2^2u^s(x,k_2)&\cdots&8\pi k_{2M+1}^2u^s(x,k_{2M+1})\\\\
  8\pi k_2^2u^s(x,k_2)&8\pi k_3^2u^s(x,k_3)&\cdots&8\pi k_{2M+2}^2u^s(x,k_{2M+2})\\\\
  \vdots&\vdots&\ddots&\vdots\\
  8\pi k_{J-2M}^2u^s(x,k_{J-2M})&8\pi k_{J-2M+1}^2u^s(x,k_{J-2M+1})&\cdots&8\pi k_J^2u^s(x,k_J)\\\\
\end{matrix}\right].
\end{align*}

\begin{theorem}\label{Uniqueness-MultiPoint-R3}
Consider $M$ point sources in $\R^3$. Let $L\geq6M-2$, and assume that any four sensors in $\Gamma_L$ are non-coplanar, then the positions $z_1,z_2,\ldots,z_M$ can be uniquely determined by the data matrix $U(x)$ for all $x\in \Gamma_L$. Having located all the positions $\{z_1,z_2,\ldots,z_{M}\}$, we can take a sensor $x\in\Gamma_L$ such that
\begin{equation}\label{r1neqr2}
r_{m_{1}}\neq r_{m_{2}}, \quad if \quad m_1\neq m_2
\end{equation}
and the corresponding $\{\tau_1,\tau_2,\ldots,\tau_{M}\}$ are uniquely determined by the scattered fields $u^s(x,k_j)$, $j=1,2,\ldots,J.$
\end{theorem}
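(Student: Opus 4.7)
The plan is to exploit the Hankel-like structure of $U(x)$ to first identify the effective distances at each sensor via a Prony-type argument, and then combine across sensors to localize the sources and read off the strengths. The proof naturally splits into three stages.

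\emph{Stage 1: Vandermonde factorization and rank.} I would substitute the exponential expansion $8\pi k_j^2 u^s(x,k_j)=\sum_{m=1}^{M^*}\tau_m^*\xi_m^j-\sum_{m=1}^{M^*}\tau_m^*\eta_m^j$ into each entry $U(x)_{ij}$ and observe that $\xi_m^{i+j-1}$ splits as $\xi_m^{i-1}\cdot\xi_m\cdot\xi_m^{j-1}$ (and similarly for $\eta_m$), so the data matrix factors as $U(x)=V_{J-2M}\,D\,V_{2M+1}^T$. The $2M^*$ nodes $\{\xi_m\}\cup\{\eta_m\}$ are pairwise distinct: the modulus separation \eqref{xi1-eta<1} rules out cross-collisions, the assumption $k_0\leq\pi/(2R)$ together with distinctness of the $f_m$'s gives distinct $\xi_m$'s, and strict monotonicity of $t\mapsto e^{-k_0 t}$ gives distinct $\eta_m$'s. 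With $J-2M>2M\geq 2M^*$ and $2M+1>2M\geq 2M^*$, both Vandermonde factors have full column rank $2M^*$, and $D$ is invertible since all $\tau_m^*\neq 0$. Hence $\mathrm{rank}\,U(x)=2M^*$, which already pins down $M^*$ from the data.

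\emph{Stage 2: Prony-type extraction of the nodes.} I would then read off the nodes by a matrix-pencil step: the two row-shifted sub-matrices of $U(x)$ obtained by dropping the first, respectively the last, row share a common right factor $V_{2M+1}^T$ but differ in the left factor by multiplication on the right by $\mathrm{diag}(\xi_1,\ldots,\xi_{M^*},\eta_1,\ldots,\eta_{M^*})$. Hence the generalized eigenvalues of this pencil are precisely $\{\xi_m\}\cup\{\eta_m\}$, and they are uniquely determined thanks to the full-rank conditions and pairwise distinctness. The modulus dichotomy separates the two classes, and the condition $k_0\leq\pi/(2R)$ guarantees that $f_m=(\arg\xi_m)/k_0=-(\log\eta_m)/k_0$ is unambiguously reconstructed, yielding $\mathcal{M}_x$ for every $x\in\Gamma_L$.

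\emph{Stage 3: Localization and strengths.} Knowing $\mathcal{M}_x$ at each of the $L\geq 6M-2$ non-coplanar sensors reduces the localization of $z_1,\ldots,z_M$ to exactly the geometric setting already used in Theorem \ref{Multipoint sources-12}; Lemmas 3.2 and 3.3 of \cite{JiLiu-nearfield} apply verbatim. For the strengths, once the positions are known one chooses a sensor $x\in\Gamma_L$ satisfying \eqref{r1neqr2}; then $M^*=M$, $\tau_m^*=\tau_m$, and the diagonal entries of $D$ extracted in Stage 2 read off $\tau_1,\ldots,\tau_M$ directly. Equivalently, the linear system $\sum_{m=1}^{M}(\tau_m/r_m)(\xi_m^j-\eta_m^j)=8\pi k_j^2 u^s(x,k_j)$, $j=1,\ldots,J$, has rank $M$ (its coefficient matrix is the difference of two Vandermonde blocks with disjoint node sets) and is therefore uniquely solvable.

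The main obstacle is the Prony-type extraction in Stage 2: one must argue carefully that the factorization $V_{J-2M}\,D\,V_{2M+1}^T$ is essentially unique (up to simultaneous permutation of the Vandermonde columns and the entries of $D$). This is classical given the node distinctness and the rank conditions, but it is the only nonmechanical step—the rest (rank computations, geometric triangulation via \cite{JiLiu-nearfield}, and inverting the final linear system for the $\tau_m$) reduces to routine linear algebra.
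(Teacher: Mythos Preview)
Your argument is correct, and the overall architecture (Vandermonde factorization, node extraction, geometric triangulation via \cite{JiLiu-nearfield}, linear solve for strengths) matches the paper. The substantive difference is your Stage~2. The paper does \emph{not} use a matrix-pencil step; instead it proves the range identity $\mathcal{R}(U)=\mathcal{R}(V_{J-2M})$ and then shows, for the test vector $\Theta(\theta)=(1,\theta,\ldots,\theta^{J-2M-1})^T$, that $\Theta(\theta)\in\mathcal{R}(U)$ if and only if $\theta\in\{\xi_1,\ldots,\xi_{M^*},\eta_1,\ldots,\eta_{M^*}\}$. This is a MUSIC-type range criterion rather than an ESPRIT/pencil eigenvalue computation. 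Both approaches rest on the same rank and distinctness facts you established in Stage~1, and both recover the node set exactly; the paper's version is perhaps closer in spirit to the sampling indicators used elsewhere in the paper, while your pencil argument is slightly more constructive (it directly produces the nodes as eigenvalues rather than characterizing them by a membership test).

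One small correction: in Stage~3 you write ``$\tau_m^*=\tau_m$'' at the distinguishing sensor, but in the paper's notation $\tau_m^*=\tau_m/r_m$ (the $1/r_m$ prefactor from the three-dimensional fundamental solution is absorbed into the coefficient). Your alternative linear-system formulation is correct as stated, and indeed the paper also closes by solving a linear Vandermonde system $V_{J+1}\mathbb{T}=\mathbb{U}$ for the strengths, so this slip is purely notational.
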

\begin{proof}
We prove the theorem in the following four parts:
\begin{itemize}
    \item \textbf{Part one: We show that the data matrix $U$ has a factorization} 
    \begin{equation}\label{U-factorization}
        U=V_{J-2M} D V_{2M+1}^T
    \end{equation}
    {\bf and} 
    \begin{equation}\label{RangeIdentity}
        \mathcal{R}(U)=\mathcal{R}(V_{J-2M}).
    \end{equation}
    The factorization \eqref{U-factorization} follows from a straightforward calculation. This further implies that $\mathcal{R}(U)\subset\mathcal{R}(V_{J-2M})$. Thus \eqref{RangeIdentity} follows by showing $\mathcal{R}(V_{J-2M})\subset \mathcal{R}(U)$. To do so, for any $\phi\in \mathcal{R}(V_{J-2M})$, let $\psi\in \C^{2M^*\times 1}$ be such that $\phi=V_{J-2M}\psi$ and we show that $\phi\in \mathcal{R}(U)$. Note that $\xi_1,\xi_2,\ldots,\xi_{M^*},\, \eta_1,\eta_2,\ldots,\eta_{M^*}$ are mutually distinct by construction, we deduce that $\text{rank}(D)=2M^*$, $\text{rank}(V_{J-2M})=\text{min}\{J-2M,2M^*\}$ and
    $\text{rank}(V_{2M+1})=\text{min}\{2M+1,2M^*\}$,
where we have used the fact that $V_{J-2M}$ and $V_{2M+1}$ are Vandermonde matrices. From the assumption $J>4M$ and the fact $M\geq M^*$, we obtain 
\ben
\text{rank}(V_{J-2M})=\text{rank}(V_{2M+1})=\text{rank}(D)=2M^*.
\enn
Therefore, there exists $\psi^*\in \C^{(2M+1)\times 1}$ such that $\psi=D V_{2M+1}^T\psi^*$. In terms of the assumption $\phi=V_{J-2M}\psi$, with the help of the factorization \eqref{U-factorization}, we have $\phi=V_{J-2M} D V_{2M+1}^T\psi^*=U\psi^*$, i.e., $\phi\in \mathcal{R}(U)$. 

    \item \textbf{Part two: Define $\Theta(\theta):=(1,\theta,\theta^2,\ldots,\theta^{J-2M-1})^T\in \C^{(J-2M)\times1}$, then we show }
    \begin{equation}\label{NecessarySufficient}
        \Theta(\theta)\in \mathcal{R}(V_{J-2M})\Longleftrightarrow \theta\in \{\xi_1,\xi_2,\ldots,\xi_{M^*},\eta_1,\eta_2,\ldots,\eta_{M^*}\}.
     \end{equation}
    First, let $\theta\in \{\xi_1,\xi_2,\ldots,\xi_{M^*},\eta_1,\eta_2,\ldots,\eta_{M^*}\}$, then $\Theta(\theta)$ is a column of the Vandermonde matrix $V_{J-2M}$ and thus there exists some unit column vector $e_j$ such that $V_{J-2M}e_j=\Theta(\theta)$ for some $j\in\{1,2,\ldots,J\}$, i.e., $\Theta(\theta)\in \mathcal{R}(V_{J-2M})$.
    Let now $\theta \notin \{\xi_1,\xi_2,\ldots,\xi_{M^*},\eta_1,\eta_2,\ldots,\eta_{M^*}\}$. Then
    \begin{equation}
    \text{rank}(V_{J-2M},\Theta(\theta))=\text{min}(J-2M,2M^{*}+1)=2M^{*}+1,
    \nonumber
    \end{equation}
    because  $J>4M$ and $M^*\leq M$.
    Therefore, $\Theta(\theta)\notin \mathcal{R}(V_{J-2M})$ since we have proved in the first part that $\text{rank}(V_{J-2M})=2M^*$.
   
    \item \textbf{Part three: We show the uniqueness of $\{z_1, z_2,\ldots, z_M \}$ from the data matrix $U(x), x\in\G_L$.} \\
    From \eqref{RangeIdentity} and \eqref{NecessarySufficient}, we deduce that $\xi_1,\xi_2,\ldots,\xi_{M^*},\eta_1,\eta_2,\ldots,\eta_{M^*}$ are uniquely determined by the data matrix $U(x)$. Recall from \eqref{xi1-eta<1} that $|\xi_m|=1, |\eta_m|<1$ for all $m\in\{1,2,\ldots,M\}$,  we can conclude that the set $\{\eta_1,\eta_2,\ldots,\eta_{M^*}\}$ is uniquely determined by the data matrix $U(x)$. Furthermore, from the monotonicity of exponential function, we deduce that the set $\mathcal M_x$ is uniquely determined by the data matrix $U(x), x\in\G_L$.
    Finally, the uniqueness of $\{z_1,z_2,\ldots,z_{M^*}\}$ follows by using Lemmas 3.2 and 3.3 in \cite{JiLiu-nearfield} under the assumptions $L\geq6M-2$ and any four sensors in $\Gamma_L$ are non-coplanar.

    \item \textbf{Part four: We complete the proof of this theorem by showing the scattering strengths $\{\tau_1,\tau_2,\ldots,\tau_{M}\}$ can be uniquely determined by the scattered fields $u^s(x,k_j)$, $j=1,2,\ldots,J.$ at the sensor satisfying \eqref{r1neqr2}.} \\
    The existence of the sensor $x\in\G_L$ satisfying \eqref{r1neqr2} has been shown in Lemma 3.3 in \cite{JiLiu-nearfield}. We then show the uniqueness of the scattering strengths. To do so, we define
\ben
     &&\mathbb U
     :=(0,8\pi k_1^2u^s(x,k_1),8\pi k_2^2u^s(x,k_2),\ldots,8\pi k_J^2u^s(x,k_J))^T\in \mathbb C^{(J+1)\times 1},\\
     &&\mathbb T
     :=(\tau_1^*,\tau_2^*,\ldots,\tau_M^*,-\tau_1^*,-\tau_2^*,\ldots,\tau_M^*)^T\in \mathbb C^{2M\times 1},
\enn
where $\tau_m^*={\tau_m}/{r_{m}}$, $m=1,2,\ldots,M$.
Note that 
\begin{equation}
 V_{J+1}=(v_{\xi_1},v_{\xi_2},\ldots,v_{\xi_M},v_{\eta_1},v_{\eta_2},\ldots,v_{\eta_M})\in \mathbb C^{(J+1)\times 2M},
\nonumber
\end{equation}
and using the representation \eqref{R3-scatter} of the scattered field, we have
\begin{equation}
   V_{J+1} \mathbb T=\mathbb U. 
\label{equation}
\end{equation}
Since $\xi_1,\xi_2,\ldots,\xi_M,\eta_1,\eta_2,\ldots,\eta_M$ are distinct by the assumptions \eqref{k0-assumption} and \eqref{r1neqr2}, we have
\begin{equation}
\text{rank}(V_{J+1})=\text{min}(J+1,2M)=2M.
\nonumber
\end{equation}
Therefore, equation \eqref{equation} is uniquely solvable, meaning that $\mathbb T$ can be uniquely obtained by solving equation \eqref{equation}.
\end{itemize}
\end{proof}

\section{Numerical algorithms}
\label{Numerical methods}
\setcounter{equation}{0}
Based on the constructive uniqueness proofs in the previous section, we introduce three numerical algorithms for identifying the point sources. 
We begin with the simplest case for identifying a point source in $\R^3$ from the scattered fields with four sensors and three frequencies. Then we consider the general case with multiple point sources in both $\R^2$ and $\R^3$.

\subsection{Identification of a single point source}
For a single point source with position $z_1\in\R^3$ and scattering strength $\tau_1\in\C$, we take the following scattered fields
\begin{equation}
u^{s}(x, k),\quad x\in \{x_1,x_2,x_3,x_4\},\ k\in\{k_0,2k_0,4k_0\},
\nonumber
\end{equation}
for some $k_0>0$ and four non-coplanar sensors $x_1, x_2, x_3, x_4$. 
Following Theorem \ref{Single point-2}, we first compute its scattering strength using the formula
\begin{equation}
\tau_1=\frac{8\pi k_0^2u^{s}(x_1,k_0)r_{1,1}}{e^{ik_0r_{1,1}}-e^{-k_0r_{1,1}}}
\label{tau-R3-single point}
\end{equation}
where
\ben
r_{1,1}=-\frac{1}{k_0}\text{log}\left[\Re \left(\frac{4u^{s}(x_1,2k_{0})}{u^{s}(x_1,k_{0})}\right)-{\Im \left(\frac{u^{s}(x_1,4k_{0})}{u^{s}(x_1,2k_{0})}\right)}\Bigg/{2\Im\left(\frac{u^{s}(x_1,2k_{0})}{u^{s}(x_1,k_{0})}\right)}\right].
\enn

Note that $r_{1,1}=|x_1-z_1|$, we can similarly obtain the other three distances $r_{j,1}=|x_j-z_1|,\, j=2,3,4$. Then one may compute the position $z_1$ from the one-point determination scheme proposed in \cite{2020Identification}. Alternatively, to determine the position, we introduce a sampling method with the following indicator
\begin{equation}
I_{\R^3-\text{single}}(z)=\left(\sum\limits_{j=1}^{4}\left|\frac{\left|x_{j}-z\right|}{\left|\tau_{1}\right| \left| e^{i k_{0}\left|x_{j}-z\right|}-e^{-k_0\left|x_{j}-z\right|}\right|}-\frac{1}{8 \pi k_{0}^{2} \mid u^{s}\left(x_{j}, k_{0}\right)|}\right|\right)^{-1},\quad z\in \Omega_1,
\label{R^3-Single point}
\end{equation}
where $\Om_1$ is a bounded domain containing $z_1$.
Actually, with the help of the representation of the scattered field given by \eqref{Scattered field}, we observe that the indicator function $I_{\R^3-\text{single}}(z)$ is smooth in $\R^3\ba\{z_1\}$ and blows up at $z=z_1$. 

To sum up, we have the following Algorithm \ref{_R3_single} for identifying a single point source.
\begin{algorithm}[h!]
\label{_R3_single}
\caption{Algorithm for single point source located in $\R^3$}
\LinesNumbered 
1. Collect scattered field patterns $u^s(x,k)$ for all $x\in \{x_1,x_2,x_3,x_4\}$, $k\in\{k_0,2k_0,4k_0\}$;\\
2. Compute the scattering strength $\tau_1$ by the formula \eqref{tau-R3-single point};\\
3. Locate $z_1$ by plotting $I_{\R^3-\text{single}}(z)$, $z\in\Om_1$, given by \eqref{R^3-Single point}.\\
\end{algorithm}
\subsection{Identification of multiple point sources}
We now turn to the much more complex case with multiple point sources. The measurements are the multi-frequency sparse scattered fields
    \begin{equation}
     u^{s}(x, k),\quad x\in \Gamma_L,k\in (k_{-},k_{+}).
     \nonumber
     \end{equation}

\begin{itemize}
    \item \textbf{Identification of multiple point sources in $\R^2$}\\
    According to Theorem \ref{Multipoint sources-11}, we assume that any three sensors in $\Gamma_L$ are not collinear. Our first step is to locate all the point sources by a direct sampling method. Precisely, let $\Omega_2\subset \R^2$ be a bounded domain containing all the point sources. 
   If $\tau_m\in\R, m=1,2,\ldots, M$, following Theorem \ref{Multipoint sources-11}, we define
    \begin{equation}
     I_{\R^2-\text{multiple}}^{\text{real}}(z)=\Bigg|\sum\limits_{x\in \Gamma_L}\int_{k_{-}}^{k_{+}}8 k^{3} \Im(  u^{s}(x, k)) J_{0}(k r) d k\Bigg|,\quad z\in \Omega_2.
     \label{R^2-multipoint-real}
     \end{equation}
    More generally, for $\tau_m\in\C, m=1,2,\ldots, M$, we introduce the following indicator 
     \begin{equation}
     I_{\R^2-\text{multiple}}^{\text{complex}}(z)=\Bigg|\sum\limits_{x\in \Gamma_L}\int_{k_{-}}^{k_{+}}-8i k^{3} u^{s}(x, k) J_{0}(k r) d k\Bigg|,\quad z\in \Omega_2.
     \label{R^2-multipoint}
     \end{equation}
     As shown in the proof of Theorem \ref{Multipoint sources-11}, if the number $L$ is large enough and the frequency band $k_{+}-k_{-}$ is wide enough, both the indicators $I_{\R^2-\text{multiple}}^{\text{real}}(z)$ and$I_{\R^2-\text{multiple}}^{\text{complex}}(z)$ will take local maximum values at $z=z_m, m=1,2,\ldots, M$. 
     Having located all the positions, for each position $z_{m^*}$, we can always find a sensor $x\in \Gamma_L$ such that $r_{m^{*}}\neq r_m,\ \forall m\neq m^*$, then we obtain an approximation of the scattering strength $\tau_{m^{*}}$ by the following formula
     \begin{equation}
     \tau_{m^{*}}=\frac{\int_{k_{-}}^{k_{+}}-8i k^{3}  u^{s}(x, k) J_{0}(k r_{m^*}) d k}{\int_{k_{-}}^{k_{+}} \left(H_{0}^{(1)}\left(k r_{m^*}\right)+\frac{2 i}{\pi} K_{0}\left(k r_{m^*}\right)\right) k J_{0}(kr_{m^*}) d k}.
     \label{R^2-multipoint-tau}
     \end{equation}
     Of course, the resolution of $\tau_{m^{*}}$ depends on the frequency band $k_{+}-k_{-}$. To sum up, we have the following Algorithm \ref{_R2_multipoint} for identifying the point sources in $\R^2$.
\begin{algorithm}[h!]
\label{_R2_multipoint}
\caption{Algorithm for multiple point sources located in $\R^2$}
\LinesNumbered 
1. Collect scattered field patterns $u^s(x,k)$ for all $x\in \Gamma_L$, $k\in(k_{-},k_{+})$. We assume that any three sensors in $\Gamma_L$ are not collinear;\\
2. Locate $z_m, m=1,2,\ldots, M$ by plotting $I_{\R^2-\text{multiple}}^{\text{complex}}(z)$ given \eqref{R^2-multipoint}. In particular, if $\tau_m\in\R, m=1,2,\ldots,M$, one may use the other indicator $I_{\R^2-\text{multiple}}^{\text{real}}(z)$ given by \eqref{R^2-multipoint-real} to locate the positions.\\
3. For each position $z_{m^*}$, compute $\tau_{m^{*}}$ by using the formula \eqref{R^2-multipoint-tau} with a sensor $x\in \Gamma_L$ such that $r_{m^{*}}\neq r_m,\ \forall m\neq m^*$.
\end{algorithm}
\end{itemize}

\begin{itemize}
    \item \textbf{Identification of multiple point sources in $\R^3$}\\
    Similarly,  let $\Omega_3\subset \R^3$ be a bounded domain containing all the point sources. We assume that any four sensors in $\G_L$ are non-coplanar. Following Theorem \ref{Multipoint sources-12}, to locate the point sources, we introduce the following indicator  
     \begin{equation}
     I_{\R^3-\text{multiple}}(z)=\Bigg|\sum\limits_{x\in \Gamma_L}\int_{k_{-}}^{k_{+}}8 k^{2}u^{s}(x,k)e^{-ikr}dk\Bigg|,\quad z\in \Omega_3,
     \label{R^3-multipoint}
     \end{equation}
     After determining all the positions, for each position $z_{m^*}$ we take a proper sensor $x\in \Gamma_L$ in the same way as in $\R^2$ to calculate the corresponding scattering strength by
     \begin{equation}
     \tau_{m^{*}}=\frac{r_{m^*}}{k_{+}-k_{-}}\int_{k_{-}}^{k+}8\pi k^{2}u^{s}\left(x,k\right)e^{-ikr_{m^*}}dk.
     \label{R^3-multipoint-tau}
     \end{equation}
     We summarize the above procedure in the following Algorithm \ref{_R3_multipoint} for identifying the multiple point sources in $\R^3$.
     \begin{algorithm}[h!]
\label{_R3_multipoint}
\caption{Algorithm for multiple point sources located in $\R^3$}
\LinesNumbered 
1. Collect scattered field patterns $u^s(x,k)$ for all $x\in \Gamma_L$, $k\in(k_{-},k_{+})$. We assume that any four sensors in $\G_L$ are not coplanar; \\
2. Locate $\{z_1, z_2,\ldots, z_M\}$ by plotting $I_{\R^3-\text{multiple}}(z)$ given in \eqref{R^3-multipoint}.\\
3. For each position $z_{m^*}$, compute $\tau_{m^{*}}$ by using the formula \eqref{R^3-multipoint-tau} with a sensor $x\in \Gamma_L$ such that $r_{m^{*}}\neq r_m,\ \forall m\neq m^*$.
\end{algorithm}     
\end{itemize}

\section{Numerical examples}
\label{Numerical examples}
\setcounter{equation}{0}
In this section, we present some numerical examples to verify the effectiveness and robustness of the algorithms proposed in  the previous section. 
We perturb the sacttered fields by random noise as follow:
\begin{equation}
u^s(x,k)(1+\text{noise}\times \mathcal{N}),
\nonumber
\end{equation}
where $\mathcal{N}$ is a uniformly distributed pseudorandom number between -1 and 1.
\subsection{A single point source}

We consider a point source in $\R^3$ located at $z_1=(2,2,2)$ with scattering strength $\tau_1=1+i$. By Algorithm \ref{_R3_single}, we use the scattered fields at four sensors $x_1=(1,0,0), x_2=(0,1,0), x_3=(0,0,1), x_4=(-1,-1,-1)$ with three frequencies $k_1=1, k_2=2, k_3=4$. Note that the four sensors are indeed non-coplanar. The sampling domain $\Omega_1$ is $\lbrack1, 3\rbrack^3$ with the sampling space $0.1$. 
Table \ref{table-R3-single} presents a comparison between the true and reconstructed point source under. 
Figure \ref{Picture-R3-Single} shows the indicator $I_{\R^3-\text{single}}(z)$ indeed takes its local maximum at the source position. 
\begin{table}[h!]
\centering
\begin{tabular}{llll}
\hline
& True & Reconstruction with $5\%$ noise & Reconstruction with $10\%$ noise \\
\hline
Location &$(2,2,2)$ &$(2,2,1.9)$ & $(2,2.1,2) $\\
\hline
Strength & $1+i$&$0.9951+0.9457i$ & $1.0188+0.9123i$\\
\hline
\end{tabular}
\caption{Reconstruction of a single point source by the Algorithm \ref{_R3_single}.}
\label{table-R3-single}
\end{table}

\begin{figure}[h!]
\centering
\begin{tabular}{cc}
\subfigure[$L=4$, noise=$5\%$ ]{
\label{Picture-R3-Single1}
\includegraphics[width=.3\textwidth]{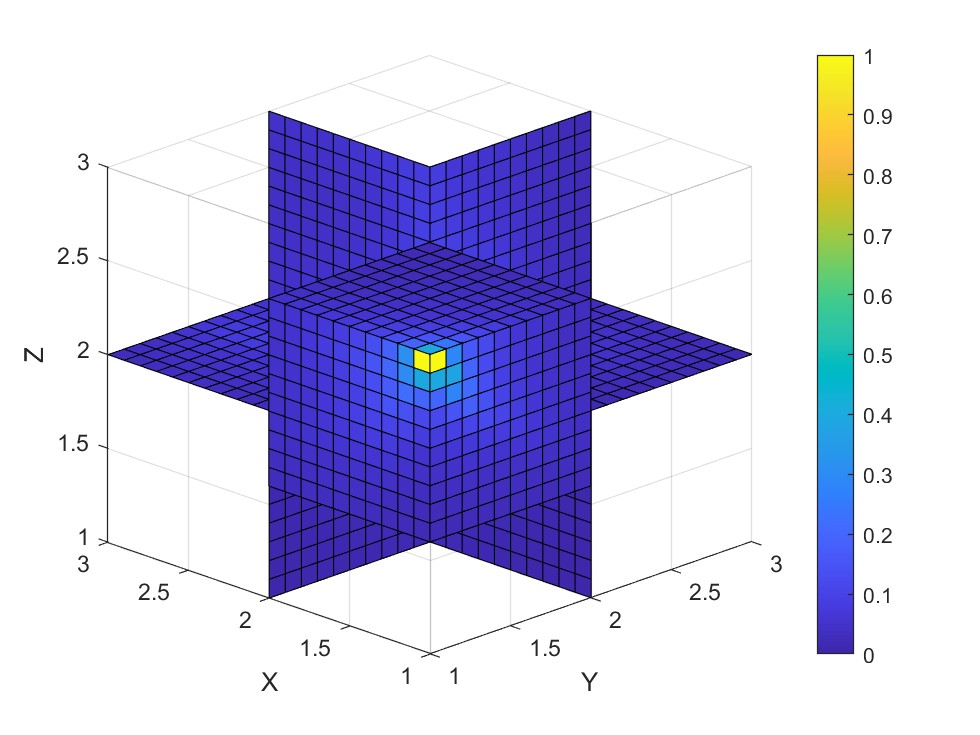}
}\hspace{0em} &
\subfigure[$L=4$, noise=$10\%$ ]{
\label{Picture-R3-Single2}
\includegraphics[width=.3\textwidth]{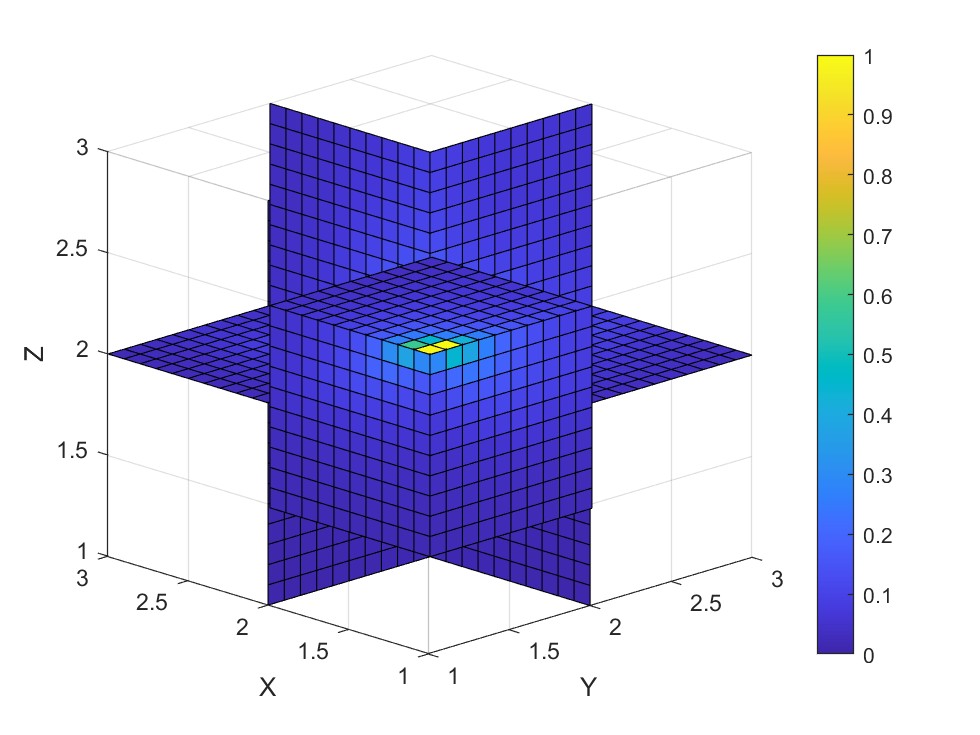}}\\
\end{tabular}
\caption{Locating the position $(2,2,2)$ by $I_{\R^3-\text{single}}(z)$. }
\label{Picture-R3-Single}
\end{figure}
\subsection{Multiple point sources}
Now we turn to the reconstructions of multiple point sources in $\R^2$ or $\R^3$. The scattered fields are taken at equidistant frequencies:
\begin{equation}
k_{0}=k_{-}=1, \quad k_{n}=1+0.1n,\quad n=1,2,\ldots,k_{+}.
\nonumber
\end{equation}
If not otherwise stated, we set $k_{+}=100$ and the measurement noise =$10\%$.

$\bullet$\quad \textbf{Reconstructions of multiple point sources in $\R^2$}

The sampling domain is $\Omega_2=\lbrack0.5,5.5\rbrack^2$ with sampling space $0.01$. 
The scattered fields are taken at $L$ sensors
\begin{equation}  
 x_{j}=(3+5\text{cos}\theta_j,3+5\text{sin}\theta_j),\quad \theta_j={2\pi j}/{L},\ j=0,1,2,\ldots,L-1.
 \label{R2-measurement-points}
 \end{equation}

We begin with a simple example with four point sources located at
\begin{equation}
 z_1=(2, 2),\quad z_2=(2, 4),\quad z_3=(4, 2),\quad{\rm and}\quad z_4=(4, 4).
\nonumber
\end{equation}
The corresponding scattering strengths $\tau_1=1, \tau_2=1.1, \tau_3=1.2$ and $\tau_4=1.3$ are real valued.  Therefore, we take the indicator  $I_{\R^2-\text{multiple}}^{\text{real}}(z)$ for locating the point sources. 

\begin{figure}[h!]
\centering
\begin{tabular}{ccc}
\subfigure[$x_1=(3,0)$]{
\label{TSLZ-R2-4points1}
\includegraphics[width=.27\textwidth]{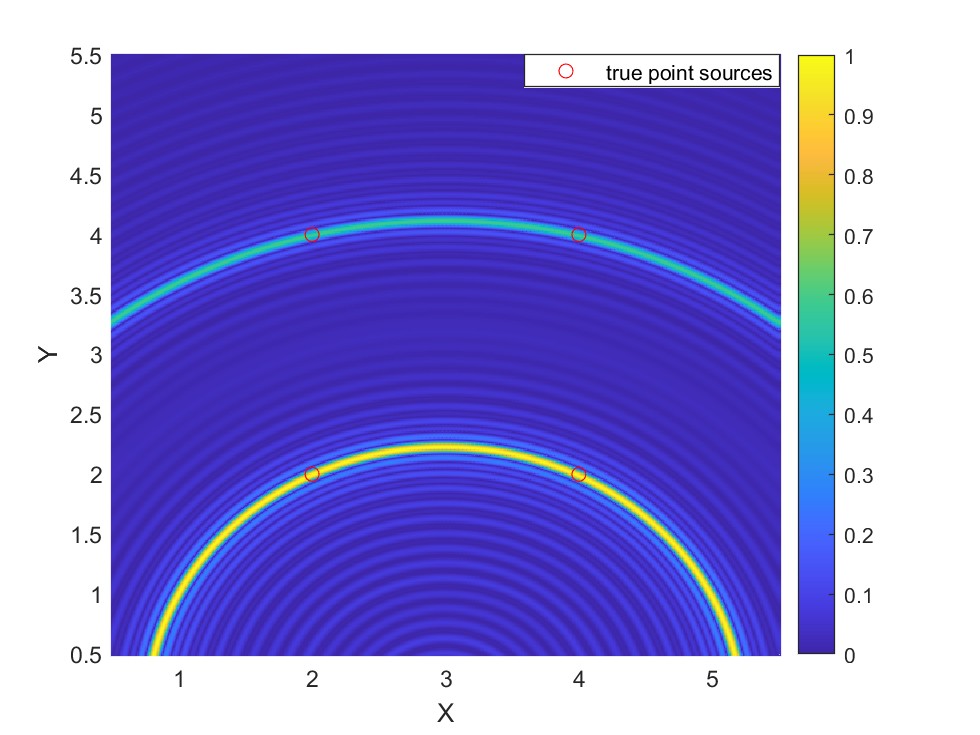}
}\hspace{0em} &
\subfigure[$x_2=(6,6)$]{
\label{TSLZ-R2-4points2}
\includegraphics[width=.27\textwidth]{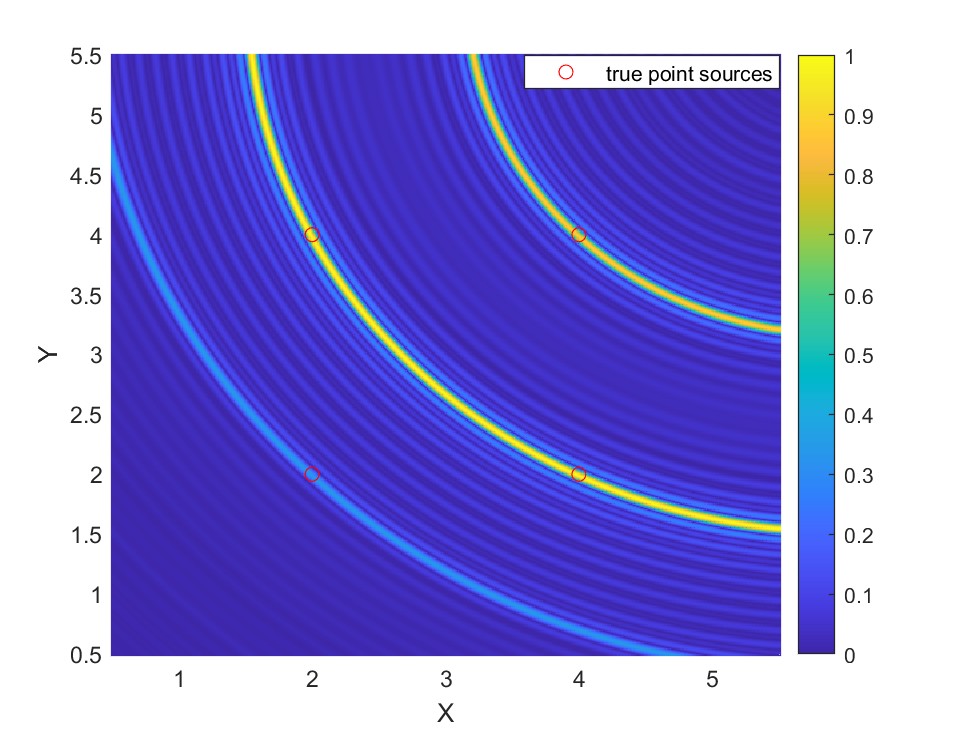}} &
\subfigure[$x_3=(4,0)$]{
\label{TSLZ-R2-4points3}
\includegraphics[width=.27\textwidth]{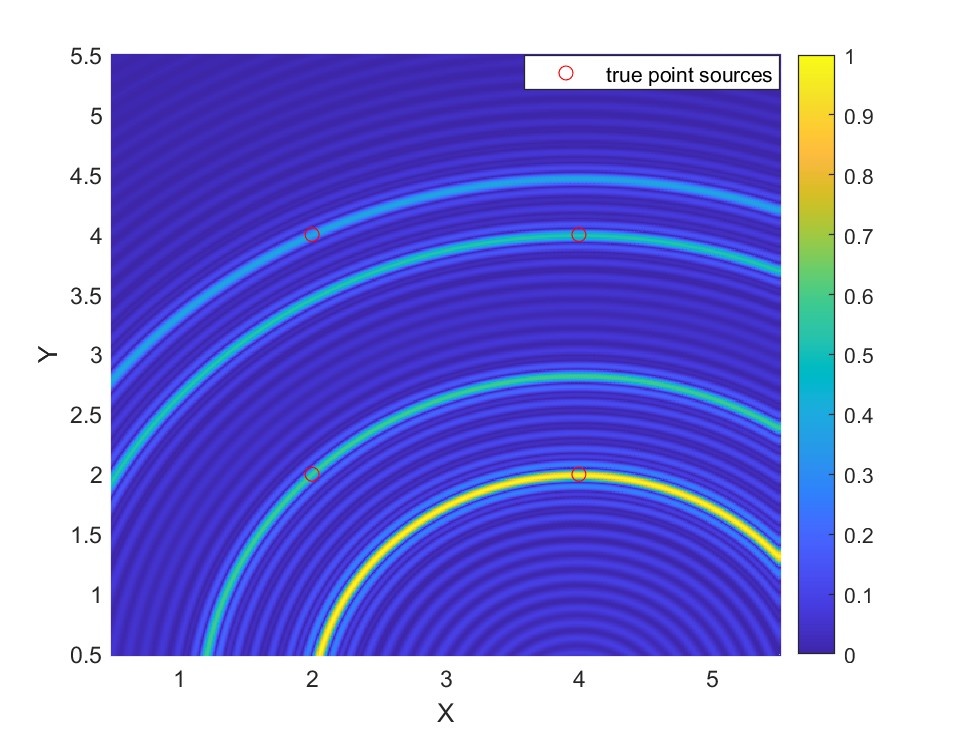}
} \\
\subfigure[$L=4$]{
\label{TSLZ-R2-4points5}
\includegraphics[width=.27\textwidth]{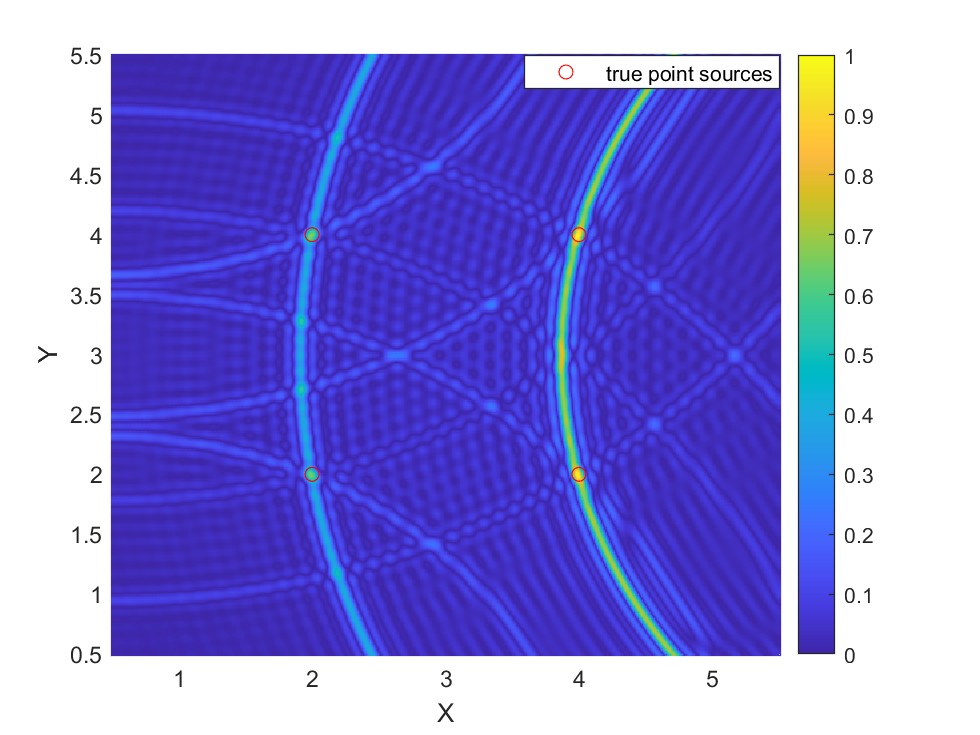}
}\hspace{0em} &
\subfigure[$L=7$]{
\label{TSLZ-R2-4points6}
\includegraphics[width=.27\textwidth]{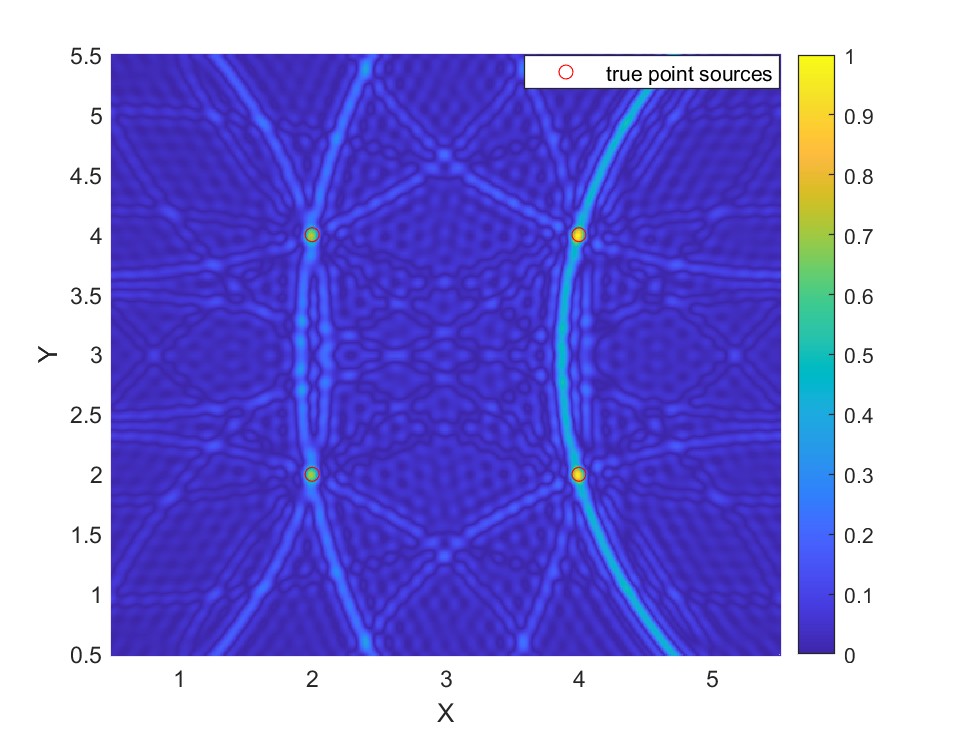}
}&
\subfigure[$L=10$]{
\label{TSLZ-R2-4points4}
\includegraphics[width=.27\textwidth]{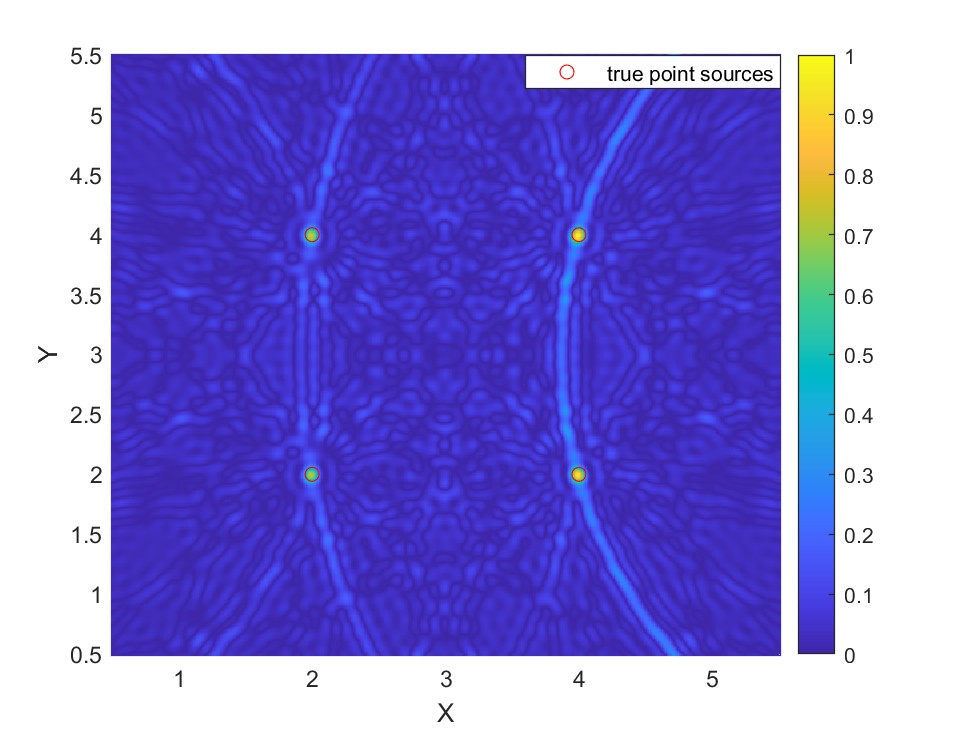}
}
\end{tabular}
\caption{Locating 4 point sources in $\R^2$  by $I_{\R^2-\text{multiple}}^{\text{real}}(z)$ with a few sensors. $k_{+}=50$. }
\label{TSLZ-R2-4points}
\end{figure}

Figure \ref{TSLZ-R2-4points1} shows the reconstruction when the scattered field is only taken at $x_1=(3,0)$. As expected, we observe two circles centered at the sensor $x_1$ passing through all the four point sources. Due to the fact that  $|z_1-x_1|=|z_3-x_1|$ and $|z_2-x_1|=|z_4-x_1|$, only two circles appear in the figure. Similarly, in Figure \ref{TSLZ-R2-4points2} we show the reconstruction with the scattered field taken at $x_2=(6,6)$. We observe three concentric circles with radii $|z_1-x_2|$, $|z_2-x_2|=|z_3-x_2|$ and $|z_4-x_2|$, respectively. 
Figure \ref{TSLZ-R2-4points3} displays four concentric circles by taking the sensor $x_3=(4,0)$. Such a sensor is regarded as a good sensor because the number of circles is the same as the number of the point sources. For this specific example, we expect to avoid using sensors such as $x_1=(3,0)$ and $x_2=(6,6)$. Unfortunately, since the positions of the point sources are what we are looking for, we have no freedom to choose sensors. 
Theorem \ref{Multipoint sources-11} implies that we need $L=4M-1=15$ sensors to uniquely recover $M=4$ point sources. Actually, much fewer sensors are needed in numerical simulations.
As shown in Figure \ref{TSLZ-R2-4points6}, $L=7$ sensors are enough to locate the four point sources. Of course, the reconstruction resolution can be improved by taking more sensors, see, for example, $L=10$ as shown in Figure \ref{TSLZ-R2-4points4}.

 \begin{table}[h!]
    \centering
    \begin{tabular}{llll}
    \hline
      True& Reconstruction& True& Reconstruction \\
       \hline
        $\tau_1=1+1i$&$0.9817 + 0.9598i$&$\tau_7=1.1$&$1.0712 + 0.0244i$\\ $\tau_2=1$&$0.9965 - 0.0904i$&$\tau_8=0.9+0.6i$&$0.8641 + 0.5309i$\\$\tau_3=0.9+1i$&$0.8676 + 0.9111i$&$\tau_9=1$&$1.0349 - 0.0043i$\\ $\tau_4=1.2 $&$1.1081 - 0.0615i$&$\tau_{10}=0.8+0.7i$&$0.7119 + 0.7570i$\\ $\tau_5=0.5+0.9i $&$0.4396 + 0.8906i$&$\tau_{11}=1.3$&$1.2521 + 0.0534i$\\  $\tau_6=0.7+0.8i $&$0.6668 + 0.7943i$\\ 
        \hline
    \end{tabular}
    \caption{Reconstructions of the scattering strengths by  the formula \eqref{R^2-multipoint-tau}.}
    \label{table-R2-F-multipoint}
\end{table}                                

\begin{figure}[h!]
\centering
\begin{tabular}{ccc}
\subfigure[$L=43$]{
\label{Picture-R2-F-multipoint1-complex}
\includegraphics[width=.27\textwidth]{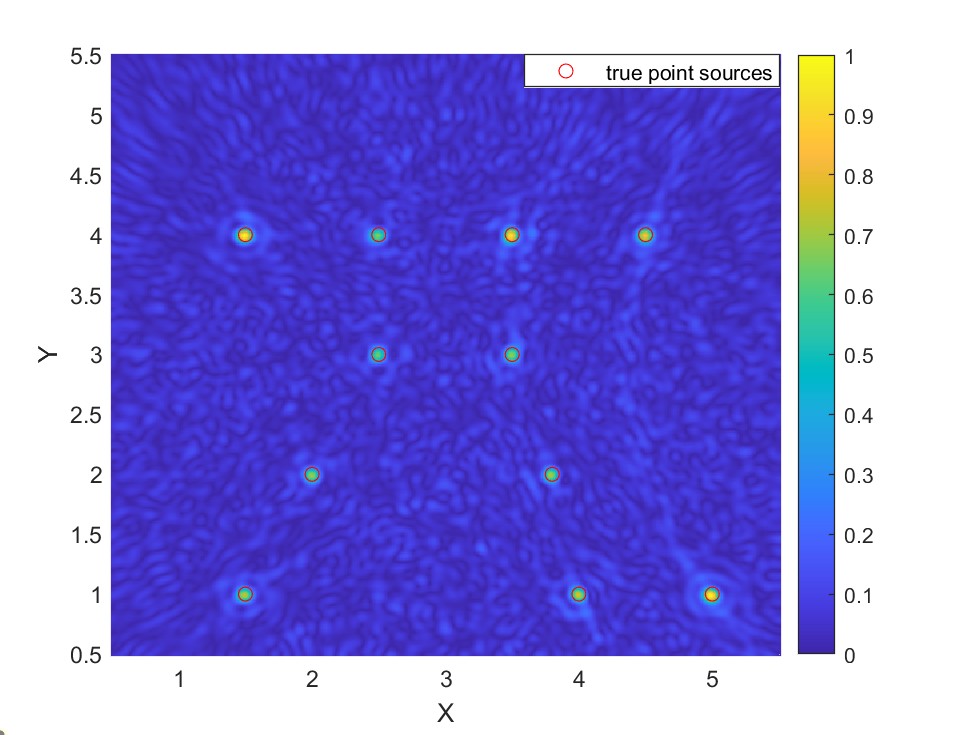}
}\hspace{0em} &
\subfigure[$L=30$]{
\label{Picture-R2-F-multipoint3-complex}
\includegraphics[width=.27\textwidth]{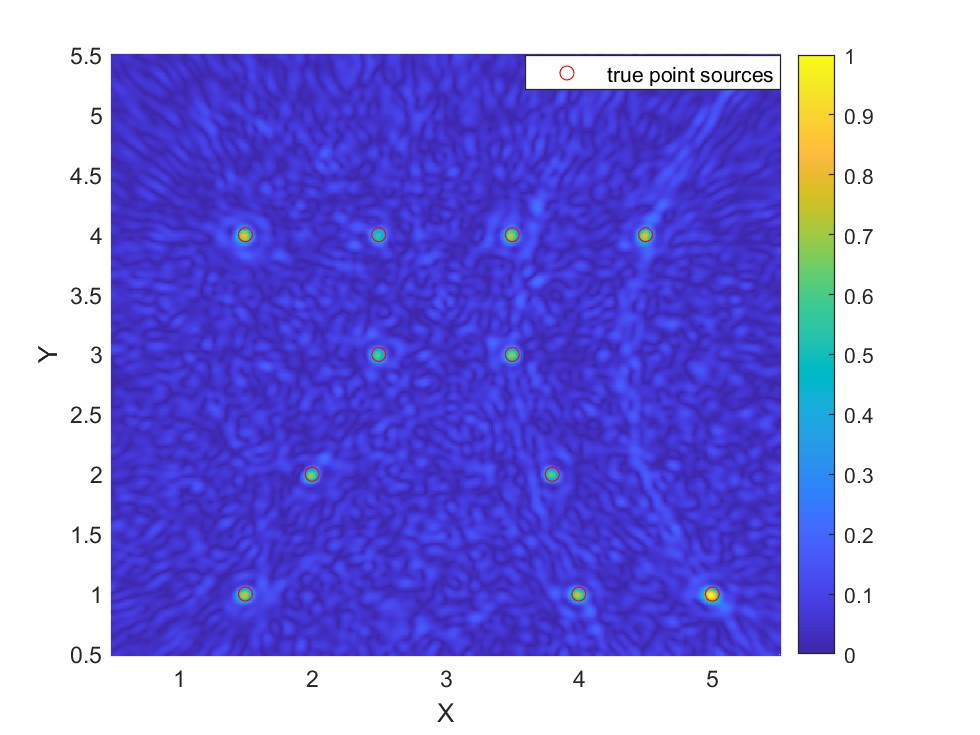}}&
\subfigure[$L=20$]{
\label{Picture-R2-F-multipoint2-complex}
\includegraphics[width=.27\textwidth]{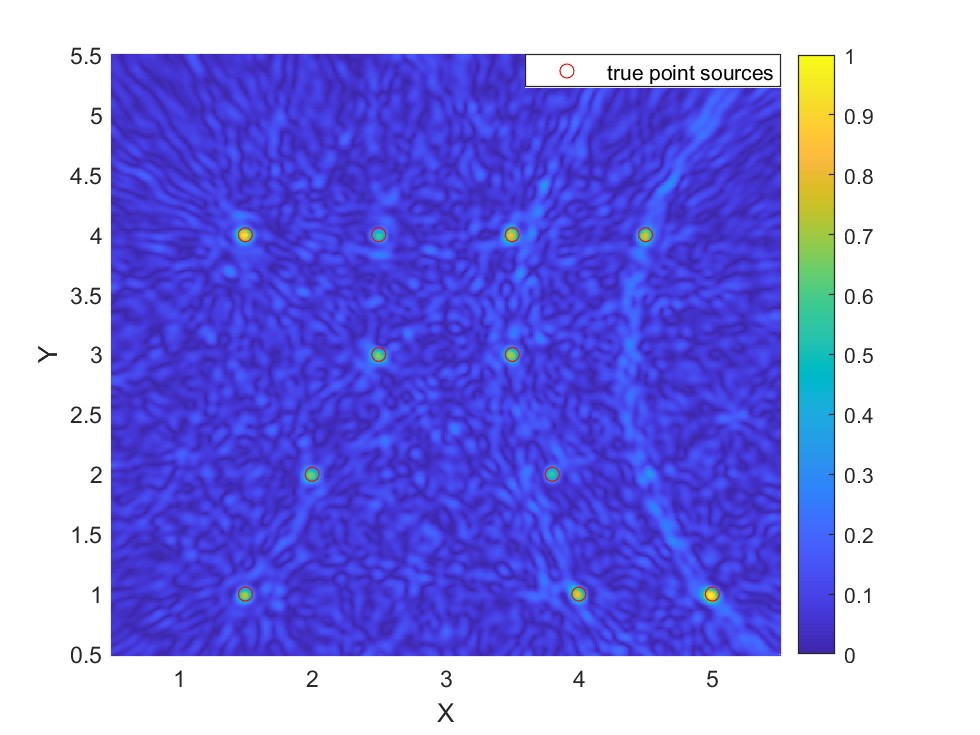}}
\end{tabular}
\caption{Locating 11 $\pi$-shaped point sources in $\R^2$ by $I_{\R^2-\text{multiple}}^{\text{complex}}(z)$. $k_{+}=50$.}
\label{Picture-R2-F-multipoint-complex}
\end{figure}

We then consider a much more complex example with 11 point sources in a $\pi$- shaped formation. The corresponding scattering strengths are listed in Table \ref{table-R2-F-multipoint}.  
 
 Table \ref{table-R2-F-multipoint} gives the comparison of the true strengths with the computed strengths under $10\%$ noise, and the strengths are well-reconstructed. Figure \ref{Picture-R2-F-multipoint-complex} shows the location reconstructions by plotting $I_{\R^2-\text{multiple point}}^{\text{complex}}(z)$ with $L=43$, $L=30$ and $L=22$, respectively. The number $L=43=4M-1$ is suggested by Theorem \ref{Multipoint sources-11}. We observe again that, with less sensors ($L=20$), all the 11 point sources are well captured even when $10\%$ relative noise is considered. Of course, the resolution can be improved by using more sensors (e.g., $L=30$ and $L=43$).\\

$\bullet$\quad \textbf{Reconstruction of multiple point sources in $\R^3$}

Finally, we present an example with four point sources in $\R^3$. The true positions are
\begin{equation}
 z_1=(1,0,0),\, z_2=(0,2,0),\, z_3=(2,1,0)\, {\rm and }\,z_4=(0,0,1.5).
\nonumber
\end{equation}
The sampling domain $\Omega_3=\lbrack0.5,2.5\rbrack^3$ with sampling space $0.1$. The measurement sensors are
 \begin{equation}  
 x_{j}=(1+3\text{sin}\theta_j\text{cos}\phi_j,1+3\text{sin}\theta_j\text{sin}\phi_j,1+3\text{cos}\theta_j), 
 \nonumber
 \end{equation}
 with $\theta_j:=\arccos{\left({2j}/{L}-1\right)}$, $\phi_j={2\pi j}/{L}$, $j=0,1,2,\ldots,L-1$.

\begin{figure}[h!]
\centering
\begin{tabular}{cc}
\subfigure[$L=22$]{
\label{Picture-R3-multipoint1}
\includegraphics[width=.3\textwidth]{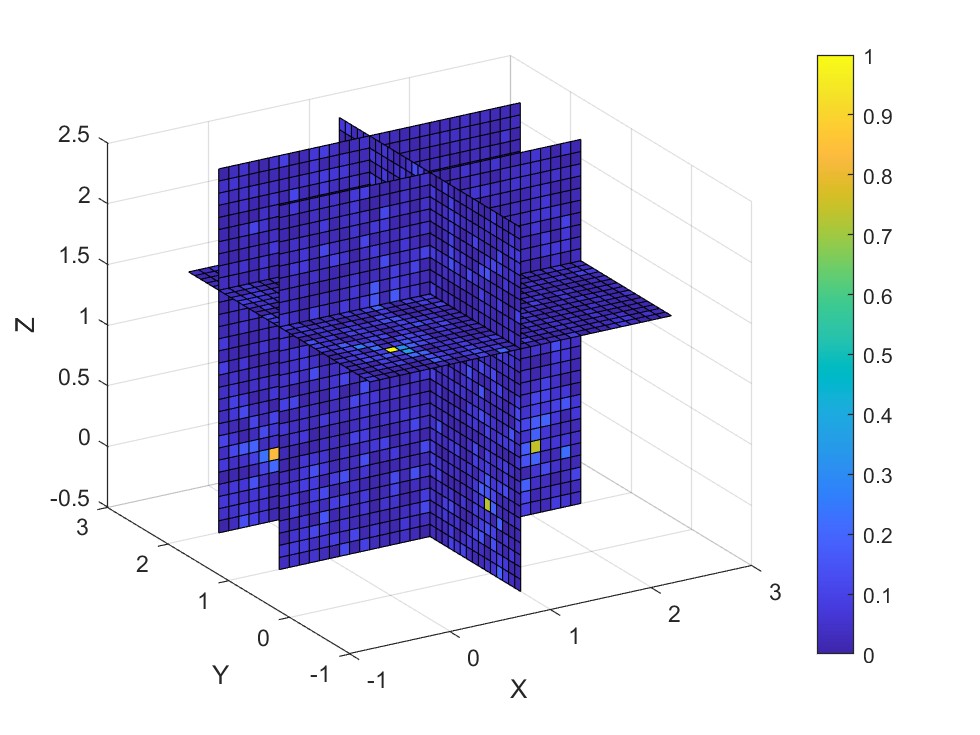}
}\hspace{0em}
\subfigure[$L=11$]{
\label{Picture-R3-multipoint2}
\includegraphics[width=.3\textwidth]{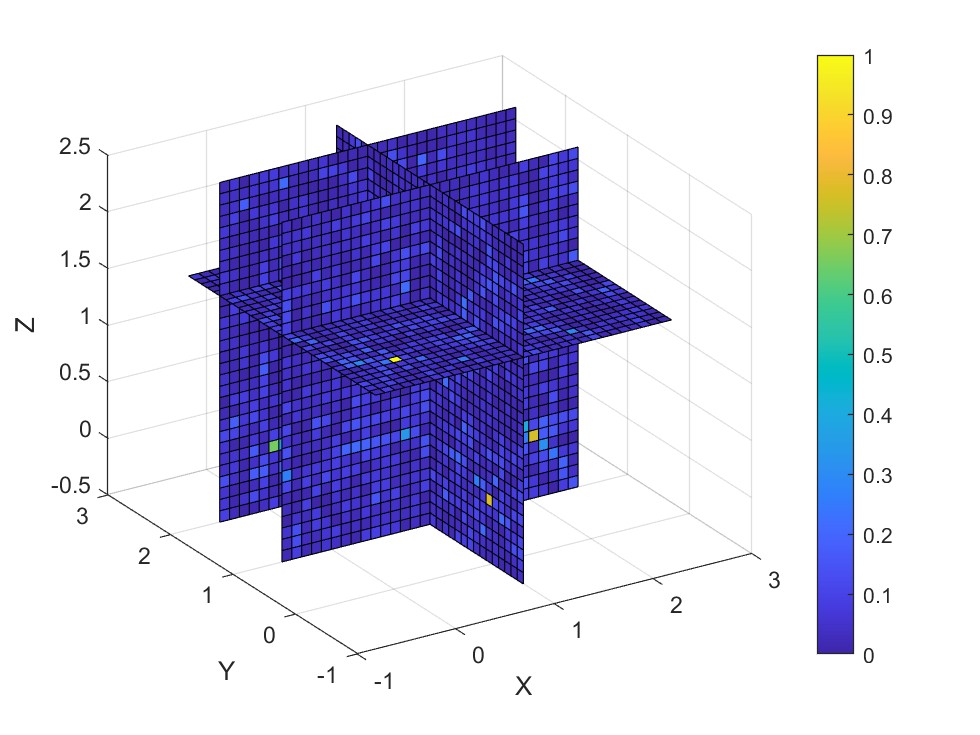}}
\label{Picture-R3-multipoint3}
\subfigure[$L=11$]{
\includegraphics[width=.3\textwidth]{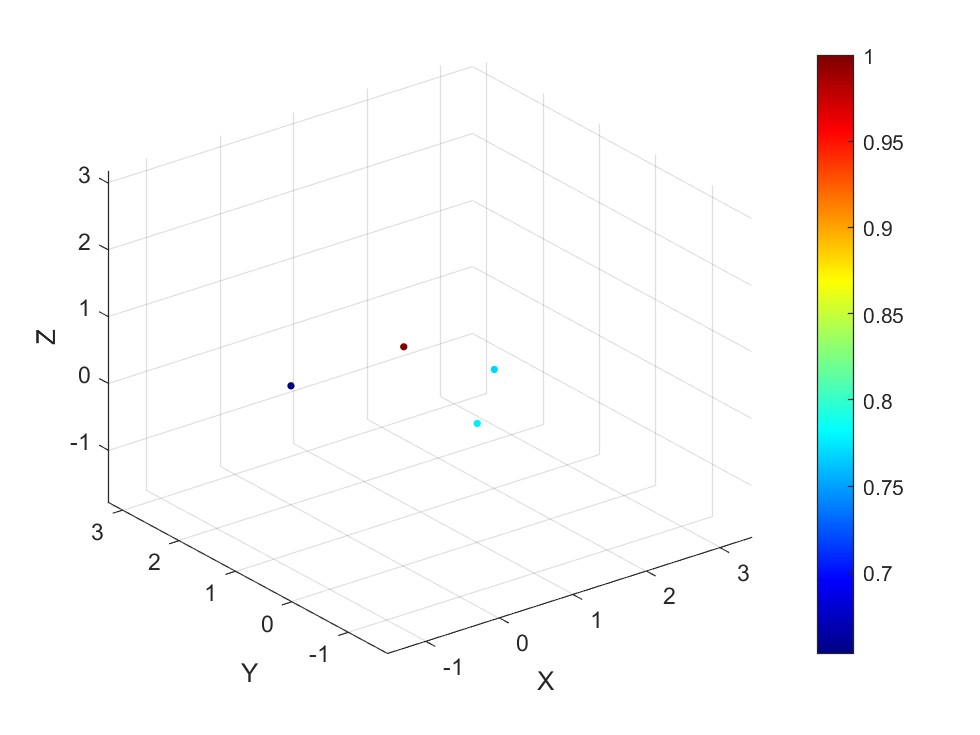}
}\hspace{0em} 
\end{tabular}
\caption{Locating 4 point sources in $\R^3$ by $ I_{\R^3-\text{multiple}}(z)$. }
\label{Picture-R3-multipoint}
\end{figure}

Figure \ref{Picture-R3-multipoint} shows the location reconstructions by $ I_{\R^3-\text{multiple}}(z)$. Obviously, the scattered fields at $11$ sensors are enough to locate all the four point sources. Table \ref{table-R3-multipoint} gives a comparison between the true strengths and the computed strengths. Considering the $10\%$ relative noise, the reconstructions are quite satisfactory.

\begin{table}[h!]
    \centering
    \begin{tabular}{lllll}
    \hline
     True& $\tau_1=1+1i$ &$\tau_2=1-1i$&$\tau_3=1+1.5i$&$\tau_4=1.5+1i$\\
       \hline
         Reconstruction& $0.9972+ 1.0337i $ &$ 0.9996 - 1.0033i $& $0.9804 + 1.5136i$& $1.5398 + 0.9696i $\\ 
        \hline
    \end{tabular}
    \caption{Reconstructions of the scattering strengths by  the formula \eqref{R^3-multipoint-tau}.}
    \label{table-R3-multipoint}
\end{table}

\section*{Acknowledgment}
 The research of X. Liu is supported by the National Key R\&D Program of China under grant 2024YFA1012303 and the NNSF of China under grant 12371430.

\bibliographystyle{SIAM}

\end{document}